\documentclass{birkmult}

\usepackage{amssymb}

\usepackage{amscd}
\newtheorem{Thm}{Theorem}[section]
 
 \newtheorem{Lem}[Thm]{Lemma}
 \newtheorem{Prop}[Thm]{Proposition}
 \theoremstyle{definition}
 
 \theoremstyle{remark}
 \newtheorem{Rem}[Thm]{Remark}
 \newtheorem{Ex}[Thm]{Example}
 \numberwithin{equation}{section}
\newtheorem{Ques}[Thm]{Question}
%\newcommand{\field}{k}
%\newcommand{\M}{{\rm M}}

%\newenvironment{proof}{\noindent{\it proof.\ \
%}}{\phantom{}\hfill$\square$}

%\textwidth 16 true cm
%\nofiles
%-----------------------------------------------------------------------------
%\oddsidemargin 0cm \evensidemargin 0cm

%%%%%%%%%%%%%%%%
\begin{document}
%%%%%%%%%%%%%%%%

\title[K\"ulshammer ideals and dihedral blocks]
{K\"ulshammer ideals and the scalar
problem for blocks with dihedral defect groups}

%% The optional argument is for shortened version appearing in the headings

% You must distinguish between first, middle and last names.

\keywords{Blocks with dihedral defect groups, Tame representation
type, Derived equivalence,
K\"ulshammer ideals (a.k.a. Generalized Reynolds ideals), 
Projective general linear groups}

%% French keywords
%\altkeywords{Exemple, Annales, classe \LaTeX}

  %Mathematical classification   (2000)
\subjclass{Primary 20C05; secondary 16G10, 18E30, 20C20 }

\author{Thorsten HOLM}

\address{
Institut f\"{u}r Algebra, Zahlentheorie und Diskrete Mathematik\\
Fakult\"{a}t f\"{u}r Mathematik und Physik\\
 Leibniz Universit\"{a}t Hannover\\
Welfengarten 1 \\D-30167 
Hannover\\ Germany}
\email{holm@math.uni-hannover.de}

\author{Guodong ZHOU}

\address{
Institut de Math\'ematiques de Jussieu\\
\'Equipe  Th\'eorie des Groupes\\
 Case 7012, 2 place Jussieu \\
F-75251 Paris Cedex 05\\
France \\
Current address:\\ 
Universit\"{a}t zu K\"{o}ln\\
 Mathematisches Institut\\
  Weyertal 86-90 \\D-50931
K\"{o}ln \\Germany }
 \email{gzhou@math.uni-koeln.de}

\thanks{The work presented in this paper has been supported by a
German-French grant DAAD-PROCOPE, DAAD grant number D/0628179,
and "partenariat Hubert Curien PROCOPE dossier 14765WB", respectively.
We gratefully acknowledge the financial support.} 

\thanks{Part of this work was done in December
2007 during a visit of the second named author at Leibniz
Universit\"{a}t Hannover. He wishes to express his sincere gratitude
to Prof. Christine Bessenrodt and the first named author for their
invitation and generous hospitality. During the preparation of
this work, the second named author benefited from financial support via
postdoctoral fellowships from the Ecole Polytechnique and 
from the network "Representation theory of algebras and
algebraic Lie theory". He would like to thank Prof. Bernhard Keller
and Prof. Steffen K\"{o}nig for their support.}

\begin{abstract}  In by now classical work, K.~Erdmann 
classified blocks of finite groups with
dihedral defect groups (and more generally algebras of dihedral type)
up to Morita equivalence. In the explicit description by quivers
and relations of such algebras with two simple modules, several
subtle problems about scalars occurring in relations remained
unresolved. In particular, for the dihedral case it is a longstanding
open question whether blocks of finite groups can occur for both
possible scalars $0$ and $1$.  

In this article, using K\"ulshammer ideals (a.k.a. generalized Reynolds 
ideals), we provide the first examples of blocks where the scalar is 1,
thus answering the above question to the affirmative. Our examples are
the principal blocks of $PGL_2(\mathbb{F}_q)$, the projective general
linear 
group of $2\times 2$-matrices  with entries in the finite field
$\mathbb{F}_q$, where $q=p^n\equiv \pm 1 \ \text{mod}\ 8$ with $p$
an odd prime number.

\end{abstract}

%%%%%%%%%%
\maketitle
%%%%%%%%%%

%\begin{center} Dedicated to Idun Reiten on the occasion of her 60th
%birthday \end{center}

%%%%%%%%%%%%%%%%%%%%%%%%%%%%%%%%%%%%%%%%%%%%%%%%%%%%%%%%%%%%%%%%%%%%%%%%%%%%

%%%%%%%%%%%%%%%%%%%%%%%%%%%%%%%%%%%%%%%%%%%%%%%%%%%%%%%%%%%%%%%%%%%%%%%%%%%%

%%%%%%%%%%%%%%%%%%%%%%%%%%%%%%%%%%%%%%%%%%%%%%%%%%%%%%%%%%%%%%%%%%%%%%%%%%%%

\section{Introduction}

Since the pioneering work of  J.~Rickard
(\cite{R1},\cite{R2},\cite{R3}) and of D.~Happel (\cite{H1},\cite{H2}),
derived equivalences and derived invariants have received much
attention in representation theory. For the representation theory
of finite groups, M.~Brou\'e's abelian defect group
conjecture (\cite{Broue}) plays a most prominent role in these
developments. Although quite a few derived invariants have been
discovered so far, such as the center (\cite{R1}), Hochschild homology 
and cohomology
(\cite{R1}), cyclic homology (\cite{Keller}), $K$-theory (\cite{TT}),
etc., most of these invariants are very difficult to compute. In
the eighties of the last century, B.~K\"{u}lshammer (\cite{K1})
introduced a sequence of ideals in the center of a symmetric algebra
defined over an algebraically closed field of positive
characteristic and he called them \textit{generalized Reynolds
ideals}. L.~H\'{e}thelyi, E.~Horv\'{a}th, B.~K\"{u}lshammer and
J.~Murray proved that these ideals are invariant under Morita equivalence
(\cite{HHKM}). In 2005, A.~Zimmermann (\cite{Z1}) proved
that these ideals are even invariant under derived equivalences. 

A remarkable feature of K\"ulshammer ideals is that they are 
in principal accessible for explicit computations. In particular,
this makes these new
derived invariants potentially useful for distinguishing algebras
up to derived equivalence (which in general is a very hard problem,
due to the lack of 'computable' derived invariants). 

For the definition and more background on K\"ulshammer ideals 
we refer the reader to Section \ref{Sec:Reynolds} below, and
for other recent developments around K\"ulshammer ideals
to the articles \cite{BHZ}, \cite{BHHKM}, \cite{HS}, \cite{HZ},
\cite{Z2}, \cite{Z3}. 

The objective of this article is to present another application of
K\"ulshammer ideals to the scalar problem for blocks with
dihedral defect groups which have two simple modules up to
isomorphisms. Before stating our main result, we review some
background.

Finite-dimensional algebras over an algebraically closed field
are divided
into three (mutually exclusive) representation types: finite, 
tame and wild. For blocks 
of group algebras of finite groups, the representation type is 
characterized by
their defect groups. A block has finite representation type if and
only if its defect groups are cyclic. These blocks are well
understood, see \cite{D66}. Tame representation type only occurs when the
ground field has characteristic 2 and when
the defect groups are dihedral, semi-dihedral or generalized
quaternion. In a series of seminal papers
(\cite{E79}, \cite{E82}, \cite{E87}, \cite{E88}, \cite{E90}, \cite{E882})
and the monograph 
\cite{E1}, K.~Erdmann introduced the larger classes of algebras of dihedral,
semidihedral and quaternion type and 
classified these algebras up to Morita
equivalence. Based on her wor, the first named author later 
classified these algebras up to derived equivalence (\cite{Holm1}, \cite{Holm2}). 
Nevertheless, several subtle problems remain
unresolved, most of them connected to certain scalars occurring
in relations. Let us explain in detail the situation for dihedral
blocks with two simple modules. We recall the classification of
K.~Erdmann and the first named author.

\begin{Thm}(\cite{E1}, \cite[Proposition (2.1)]{Holm1}) 
Let $k$ be an algebraically closed field of
characteristic 2 and $B$ a dihedral block of a finite group with
dihedral defect groups, of order $2^n$, and with two simple modules.
Then there exists a scalar $c\in \{0, 1\}$ such that $B$ is derived
equivalent to the algebra $D(2\mathcal{A})^{s}(c)$ with  $s=2^{n-2}$
defined by the following quiver with relations:

\begin{center}
\unitlength0.6cm
\begin{picture}(12,4)
  \put(5,1.7){$\bullet$}
\put(9.2,1.8){$\bullet$} \put(5.6,2.3){\vector(1,0){3.5}}
  \put(3.8,2){\circle{2.0}}
 \put(4.85,1.85){\vector(0,1){0.3}}
\put(9.1,1.7){\vector(-1,0){3.5}} \put(1.5,1.9){$\alpha$}
\put(6.8,2.7){$\beta$} \put(6.8,1.0){$\gamma$}
\end{picture}

$ \gamma \beta=0, \ \alpha^2=c(\alpha\beta\gamma)^s,$ 
$(\alpha\beta\gamma)^s=(\beta\gamma\alpha)^s$ \end{center}
\end{Thm}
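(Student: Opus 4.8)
This statement is the combination of two classification results, and the plan is to reconstruct it in that form: first K.~Erdmann's description of the Morita type of $B$, then the first named author's reduction of her list up to derived equivalence, followed by an elementary normalisation of the scalar.

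To begin, I would observe that $B$ is an indecomposable tame symmetric $k$-algebra with dihedral defect groups of order $2^n$, and hence, by the structure theory of tame blocks (Brauer, Erdmann), an algebra of \emph{dihedral type}: its stable Auslander--Reiten quiver consists of tubes of rank at most $3$ together with components of type $\mathbb{Z}A_\infty^\infty$, and its Cartan matrix is non-singular. Erdmann's Morita classification of such algebras then places the basic algebra of $B$ on her explicit list. To decide which family occurs I would bring in the block-theoretic invariants: Brauer's results on dihedral blocks with two simple modules give that $B$ has $2^{n-2}+3$ irreducible ordinary characters, and the decomposition matrix --- read off from the subsections of the defect group $D_{2^n}$ --- forces the Cartan matrix of $B$ to have the shape occurring only in the family $D(2A)$ (in particular the ``double-arrow'' family $D(2B)$ does not arise for blocks), with the integer parameter pinned to $s=2^{n-2}$ by $\det C_B$.

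Knowing the quiver and the Loewy layers of the two projective indecomposable modules, and using that $B$ is symmetric (so that each projective is injective with one-dimensional socle), the relations are determined up to a single scalar: after normalising $\beta$ and $\gamma$ one has $\gamma\beta=0$ and $(\alpha\beta\gamma)^s=(\beta\gamma\alpha)^s$, while $\alpha^2$ must lie in the one-dimensional socle of $e_0Be_0$, so $\alpha^2=c\,(\alpha\beta\gamma)^s$ for some $c\in k$ --- the relation $\gamma\beta=0$ and a degree count ruling out any further contribution. Replacing $\beta$ by $\mu\beta$ with $\mu^s=c^{-1}$ (possible since $k$ is algebraically closed) we may take $c\in\{0,1\}$. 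Finally, to collapse the remaining redundancy in Erdmann's description (the $D(2A)_1$/$D(2A)_2$ distinction and any surplus integer parameters) onto the single family $D(2A)^s(c)$, one passes to derived equivalence: following \cite[Proposition (2.1)]{Holm1} one constructs explicit two-term tilting complexes --- each built from one projective indecomposable module and a suitable truncation of the radical of the other --- and checks that their endomorphism algebras are exactly the $D(2A)^s(c)$, placing all the relevant Morita classes inside a single derived equivalence class.

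The main obstacle is this last step. Erdmann's Morita classification, although deep, may be taken as input; the delicate part is the explicit construction of the tilting complexes and the computation of their endomorphism rings, where one must check that the derived equivalence respects the scalar (so that $c=0$ and $c=1$ are not inadvertently identified) while eliminating the other parameters. I would stress that the statement only asserts the \emph{existence} of such a $c$, i.e.\ that every dihedral block with two simple modules is derived equivalent to $D(2A)^s(0)$ \emph{or} to $D(2A)^s(1)$; that these two algebras genuinely lie in distinct derived equivalence classes --- which is what makes the scalar a meaningful invariant --- and the problem of deciding which value occurs for a given block are separate matters, the former being precisely where K\"ulshammer ideals enter in what follows.
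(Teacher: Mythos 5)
The paper does not prove this theorem; it is quoted as background from Erdmann's Morita classification (\cite{E1}) together with \cite[Proposition (2.1)]{Holm1}, so there is no in-paper proof to compare against. Your two-stage reconstruction (Morita type via Erdmann, then derived equivalence via Holm) has the right architecture, but one intermediate claim is wrong, and the paper itself contradicts it. You assert that the Cartan matrix forces the Morita type of a dihedral block with two simple modules into the family $D(2\mathcal{A})$, so that the two-loop family $D(2\mathcal{B})$ ``does not arise for blocks.'' This is false: the paper's Example~1.3(2) records, citing \cite{E2}, dihedral blocks with two simple modules (principal blocks of quotients of $GU_2(q)$) whose basic algebras have quivers with two loops and hence are \emph{not} Morita equivalent to any $D(2\mathcal{A})^s(c)$; they are identified with some $D(2\mathcal{A})^s(0)$ only up to derived equivalence, via \cite[Proposition (2.1)]{Holm1}.

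So the derived equivalence step is not, as you present it, a bookkeeping device to collapse redundant subfamilies inside $D(2\mathcal{A})$; its principal content is precisely the tilting identification of the other Morita type(s) of dihedral blocks (the two-loop case) with the normal form $D(2\mathcal{A})^s(c)$. If the Cartan matrix really ruled out $D(2\mathcal{B})$ for blocks, the cited proposition of Holm would be unnecessary for this statement, and the hypothesis ``derived equivalent'' in the theorem could be strengthened to ``Morita equivalent,'' which is not the case. There is also a small arithmetic slip in your normalisation: replacing $\beta$ by $\mu\beta$ sends $(\alpha\beta\gamma)^s$, expressed in the new arrow, to $\mu^{-s}(\alpha\beta'\gamma)^s$, so the scalar changes from $c$ to $c\mu^{-s}$; to reach $1$ when $c\neq 0$ you therefore need $\mu^s=c$, not $\mu^s=c^{-1}$.
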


The following is a longstanding open problem. 
\begin{Ques} Can both values $c=0$ and $c=1$ occur for blocks of finite
groups with dihedral defect groups and with two simple modules?
\end{Ques}

It is known that the above algebras for different scalars $c=0$ and $c=1$
are not derived equivalent (\cite{Kauer}, see also \cite{HZ}). 

The following seems to be a complete list of dihedral blocks 
where the scalar could so far be determined; note that it is usually 
very hard to determine the value of the scalar directly, even for small 
examples. 
 
\begin{Ex} Let $k$ be an algebraically closed field of
characteristic $2$.

(1) The group algebra $kS_4$ of the symmetric group $S_4$
is a block with dihedral defect groups
and has two simple modules; for this block, the scalar is $c=0$ 
(\cite{E1}).

(2) In \cite[Section 1.5]{E2}, K.~Erdmann constructed infinitely 
many dihedral blocks with two simple modules for which $c=0$.
These blocks are principal blocks of certain quotients of $G=GU_2(q)$,
the general unitary group, where $q\equiv 3\mod 4$. 
More precisely,
let $\overline{G}:= G/O_{2'}(G)$, then the principal $2$-blocks
of $G_1:=\overline{G}/Z(\overline{G})$ have dihedral defect groups
and two simple module. Note that these blocks are not Morita
equivalent to an algebra of the form $D(2\mathcal{A})^{s}(0)$
(the quivers of their basic algebras have two loops, see 
\cite[Section 1.5]{E2}), but they are derived equivalent
to some algebra $D(2\mathcal{A})^{s}(0)$ by 
\cite[Proposition (2.1)]{Holm1}. 

\end{Ex}

In view of these examples, it was believed by experts
that the scalar $c$
should be always $0$ for blocks of finite groups. 
Surprisingly, we prove in this article the
following theorem which gives the first examples for which the
scalar $c=1$ does occur.

\begin{Thm} \label{main} Let  $k$ be an algebraically
closed field of characteristic $2$. Suppose that $q=p^m\equiv \pm 1\
\text{mod}\ 8$ for $p$ an odd prime number. Then the principal block
of the group algebra $kPGL_2(q)$ of the projective general linear
group is a dihedral block with two simple modules for which
$c=1$.
\end{Thm}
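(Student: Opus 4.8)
The plan is to pin down the scalar $c$ by computing K\"ulshammer ideals, which by the work of H\'ethelyi--Horv\'ath--K\"ulshammer--Murray and Zimmermann are invariant under derived equivalence. The key observation is that the two algebras $D(2\mathcal{A})^s(0)$ and $D(2\mathcal{A})^s(1)$, while not derived equivalent, differ in a way that is detectable on the level of the sequence of ideals $T_n(B)^\perp$ in the center $Z(B)$: one first carries out an explicit calculation, in the basic algebra $D(2\mathcal{A})^s(c)$ presented by the quiver and relations above, of a suitable K\"ulshammer ideal (most likely $T_1(B)^\perp$ or $T_2(B)^\perp$), obtaining a concrete answer in terms of a $k$-basis of $Z(B)$ whose dimension or structure depends on whether $c=0$ or $c=1$. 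This is a purely combinatorial computation with paths in the quiver: one writes down a basis of $B$, identifies the symmetrizing form, computes the subspaces $T_n(B)=[B,B]+\{b^{2^n}\mid b\in B\}$ (more precisely their span), and takes perpendicular spaces with respect to the form. I expect the outcome to be that for $c=0$ the relevant K\"ulshammer ideal is strictly larger (or has a different graded/socle structure) than for $c=1$; this is presumably already recorded in \cite{HZ}, which may be quoted directly.

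Next I would realize the principal block $B_0(kPGL_2(q))$ concretely enough to compute the same invariant there, or at least to compute enough of it to distinguish the two cases. The standard input is the modular representation theory of $PGL_2(q)$ for $q$ odd: a Sylow $2$-subgroup of $PGL_2(q)$ is dihedral (of order $2^n$ where $2^n \| q^2-1$, and the hypothesis $q\equiv\pm1\bmod 8$ guarantees $n\ge 3$), the principal block has exactly two simple modules (the trivial module and the Steinberg module reduced mod $2$, or rather the nontrivial constituent), and its decomposition matrix and Cartan matrix are classical. From this one either builds the basic algebra of $B_0$ by quiver-and-relations directly, or — more efficiently — one computes the center $Z(B_0)$ and the K\"ulshammer ideal using character-theoretic formulas: K\"ulshammer's original description expresses $T_n(B)^\perp$ inside $Z(B)$ in terms of the $p$-power map on conjugacy class sums, so the computation reduces to understanding $2$-power maps on classes of $PGL_2(q)$ lying in (or linked to) the principal block, together with the block idempotent. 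Carrying this out and matching the answer against the $c=1$ pattern from the first step establishes the theorem.

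The main obstacle, I expect, is the second step: transferring the abstract K\"ulshammer-ideal computation from the combinatorially transparent algebra $D(2\mathcal{A})^s(c)$ to the group algebra $kPGL_2(q)$, where one does not have an explicit quiver presentation in hand (indeed, determining that presentation directly is exactly the hard scalar problem one is trying to circumvent). One must therefore do the group-algebra side intrinsically — via conjugacy classes, $2$-power maps and the block idempotent — and this requires genuine control of the subgroup structure and fusion in $PGL_2(q)$, uniformly in $q$ (the two congruence classes $q\equiv 1$ and $q\equiv -1 \bmod 8$ may need slightly different bookkeeping). A secondary subtlety is that the K\"ulshammer ideals are invariant only under \emph{derived} equivalence, so one must make sure the chosen invariant actually separates $c=0$ from $c=1$ and is not washed out — i.e. one needs the \emph{right} term $T_n(B)^\perp$ in the sequence, and one should double-check using \cite{Kauer}/\cite{HZ} that this term is a genuine obstruction. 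Once the group-side value is computed and seen to coincide with the $c=1$ value rather than the $c=0$ value, the theorem follows, since by the classification theorem above $B_0(kPGL_2(q))$ is derived equivalent to some $D(2\mathcal{A})^s(c)$ and derived equivalence preserves the invariant.
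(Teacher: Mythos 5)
Your plan is exactly the paper's strategy: apply \cite[Thms 1.1, 4.1]{HZ}, which show that $\dim J(\overline{Z})/J^2(\overline{Z})$ equals $3$ for $D(2\mathcal{A})^s(0)$ and $2$ for $D(2\mathcal{A})^s(1)$ (valid once the defect is at least $4$, which $q\equiv\pm1\bmod 8$ guarantees since then $16\mid q^2-1$), and compute the group side via K\"ulshammer's conjugacy-class description of $T_1^\perp(kG)$. The one refinement that resolves your ``main obstacle'' is to avoid the block idempotent entirely: the paper computes $Z(kG)$, $T_1^\perp(kG)$, $J(\overline{Z}(kG))$ and $J^2(\overline{Z}(kG))$ for the \emph{full} group algebra, then observes that every non-principal $2$-block is nilpotent (one Brauer character) and hence Morita equivalent to $kC_{2^{n-1}}$ or $kC_2$ by Puig's theorem, so the multiplicativity of $\overline{Z}$ and its radical filtration over products lets one subtract the known cyclic-block contributions and isolate $\dim J(\overline{Z}(B_0))/J^2(\overline{Z}(B_0))=2$.
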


As a direct application we get that the dihedral 
blocks with two simple modules considered in \cite{E2} can not 
be derived equivalent
to the principal blocks of the projective general linear groups
considered here. 
\smallskip

The main tool of the proof of our main theorem
are K\"ulshammer ideals, a.k.a.
generalized Reynolds ideals. These form a descending series
of ideals of the center of a symmetric algebra in positive
characteristic; for the definition and basic properties we
refer to Section \ref{Sec:Reynolds} below. 
The crucial 
fact we use is a recent result of A.\,Zimmermann and the first
author \cite[Thms 1.1 and 4.1]{HZ}, showing that for different
scalars $c=0$ and $c=1$, the factor rings of the centre of the
block modulo the first K\"ulshammer ideal are not isomorphic. 
More precisely, one can distinguish these factor rings by the 
dimension of the Jacobson radical modulo its square; this dimension
is $3$ when $c=0$ and it is $2$ if $c=1$ (see \cite[4.5.2]{HZ}
for more details). Hence, given a block with dihedral defect group
and two simple modules, these results allow in principal to decide 
whether the scalar is $0$ or $q$, at least if one is able to 
explicitly compute the first K\"ulshammer ideal for the block
in question. 

\begin{Rem} Our method cannot treat the case where $q=p^m\equiv \pm 3\
\text{mod}\ 8$. The main reason is that K\"ulshammer ideals
are ideals of the center of the block in question and that when
$q=p^m\equiv \pm 3\ \text{mod}\ 8$, the defect groups are of order
$8$ in which case the center is too small for applying the results
of \cite{HZ}.\end{Rem}

This article is organized as follows. We give a short introduction
to K\"ulshammer ideals in Section \ref{Sec:Reynolds} and some basic
facts about the groups $PGL_2(q)$ are collected in 
Section \ref{Sec:groups}. 
Then Section \ref{Sec:mainproof} contains the proof of the theorem modulo
a key proposition whose proof is given in Section \ref{Sec:keyprop}. 
For simplicity, we concentrate on the case   $q\equiv 1\ \text{mod}\
8$ in these two sections.  The other case $q\equiv -1\ \text{mod}\ 8$ 
is similar and we will state the corresponding results without proofs   
in the final section.

%%%%%%%%%%%%%%%%%%%%%%%%%%%%%%%%%%%%%%%%%%%%%%%%%%%%%%%%%%%%%%%%%%%
%%%%%%%%%%%%%%%%%%%%%%%%%%%%%%%%%%%%%%%%%%%%%%%%%%%%%%%%%%%%%%%%%%%

\section{K\"ulshammer ideals (a.k.a. Generalized Reynolds ideals)} 
\label{Sec:Reynolds}

 Let $k$ be an algebraically closed field of positive
characteristic $p>0$. Let $A$ be  a (finite-dimensional) symmetric
 algebra, i.e. there exists a non-degenerate bilinear form
$(\ ,\ ): A\times A \rightarrow k$
such that for 
$a, b, c\in A$, $$(a, b)=(b,a)\ ~\text{and}\ ~  (ab,c)=(a, bc).$$
Denote by $K(A)$ the vector space generated by all commutators
$[a,b]=ab-ba$ with $a, b\in A$. For $n\geq 0$, we define
$$T_n(A)=\{x\in A\,|\, x^{p^n}\in K(A)\}.$$ 
The $n$-th K\"ulshammer ideal of $A$ is defined as
the orthogonal space (with respect to the symmetrizing 
form on $A$),
$$T_n^\bot(A)=\{x\in A\,|\, (x,  y)=0 \mbox{ for all } y\in T_n(A)\}.$$
We then have the following fundamental lemma (which is not difficult
to prove).

\begin{Lem}[\cite{K2}, no.(36)]  The subspaces $T_n^\bot(A)$ form a
decreasing sequence of ideals of the center $Z(A)$
$$Z(A)=K(A)^\bot=T_0^\bot(A) \supseteq T_1^\bot(A)\supseteq
T_2^\bot(A)\supseteq \cdots.$$  \end{Lem}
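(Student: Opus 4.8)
The plan is to reduce the statement to two elementary facts about the commutator space: that $K(A)$ is stable under multiplication by central elements, and that it is stable under taking $p$-th powers. Granting these, everything else is formal. First I would record that for $z\in Z(A)$ and $a,b\in A$ one has $z[a,b]=zab-zba=zab-bza=[za,b]$, hence $Z(A)\cdot K(A)\subseteq K(A)$; and that $y\in K(A)$ implies $y^{p}\in K(A)$. From the latter (iterated) it follows that $K(A)=T_0(A)\subseteq T_n(A)\subseteq T_{n+1}(A)$ for every $n$, since $x^{p^n}\in K(A)$ forces $x^{p^{n+1}}=(x^{p^n})^{p}\in K(A)$; and from the former, using that a central $z$ commutes with $y$, one gets $(zy)^{p^n}=z^{p^n}y^{p^n}\in Z(A)\cdot K(A)\subseteq K(A)$, so $Z(A)\cdot T_n(A)\subseteq T_n(A)$. (Along the way one also checks that $T_n(A)$ is a $k$-subspace, using the congruence below and the fact that $K(A)$ is a subspace, though this is not strictly needed for the statement.)

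The only step requiring an actual computation — and hence the one I would single out as the crux, even though it is routine — is the ``freshman's dream modulo commutators'': $(a+b)^{p}\equiv a^{p}+b^{p}\pmod{K(A)}$, and more generally $(a+b)^{p^n}\equiv a^{p^n}+b^{p^n}\pmod{K(A)}$. I would prove the $n=1$ case by expanding $(a+b)^{p}$ as the sum of the $2^{p}$ words of length $p$ in the letters $a,b$ and grouping these words into orbits under cyclic rotation; a single rotation changes a word $uv$ (with $u$ one letter) into $vu$, and $uv-vu=[u,v]\in K(A)$, so all words in one orbit are congruent modulo $K(A)$. Since $p$ is prime, each orbit has size $1$ or $p$; the size-$1$ orbits are exactly the constant words $a\cdots a$ and $b\cdots b$, while each size-$p$ orbit contributes $p$ mutually congruent terms, i.e. $p\cdot(\text{representative})=0$ in characteristic $p$. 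The general $n$ follows by induction, applying the $n=1$ case to $a^{p^{n-1}}$, $b^{p^{n-1}}$ and an error term in $K(A)$, and using closure of $K(A)$ under $p$-th powers on the error term. That closure is itself deduced from the congruence together with the identity $(ab)^{p}-(ba)^{p}=[\,a(ba)^{p-1},\,b\,]\in K(A)$ (note $a(ba)^{p-1}b=(ab)^{p}$): for a single commutator $c=ab-ba$ one gets $c^{p}\equiv(ab)^{p}-(ba)^{p}\equiv0\pmod{K(A)}$, and for a general $y=\sum_i[a_i,b_i]\in K(A)$ the congruence reduces $y^{p}$ modulo $K(A)$ to $\sum_i[a_i,b_i]^{p}\equiv0$.

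It then remains to identify the orthogonal complements. Writing $\lambda(x):=(x,1)$, symmetry and associativity of the form give $(x,y)=\lambda(xy)=\lambda(yx)$, so $z\in K(A)^{\bot}$ means $\lambda(z[a,b])=0$ for all $a,b$; by cyclicity of $\lambda$ this says $\lambda([z,a]\,b)=0$ for all $a,b$, which by non-degeneracy forces $[z,a]=0$ for all $a$, i.e. $z\in Z(A)$, and conversely every central element annihilates $K(A)$. Hence $T_0^{\bot}(A)=K(A)^{\bot}=Z(A)$. Since $T_0(A)\subseteq T_1(A)\subseteq\cdots$, taking orthogonal complements reverses the inclusions and keeps us inside $T_0^{\bot}(A)=Z(A)$, giving the descending chain $Z(A)=T_0^{\bot}(A)\supseteq T_1^{\bot}(A)\supseteq\cdots$. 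Finally, for $z\in Z(A)$, $w\in T_n^{\bot}(A)$ and $y\in T_n(A)$ we have $(zw,y)=(wz,y)=(w,zy)=0$, because $zy\in T_n(A)$ by the ideal property of $T_n(A)$ established above; thus $zw\in T_n^{\bot}(A)$, so each $T_n^{\bot}(A)$ is indeed an ideal of $Z(A)$, which completes the proof.
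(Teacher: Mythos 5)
Your proof is correct and complete. The paper itself does not prove this lemma; it simply cites K\"ulshammer (\cite{K2}, no.\,(36)) and remarks that it ``is not difficult to prove.'' Your argument is the standard one found in K\"ulshammer's work: establish that $K(A)$ is a $Z(A)$-submodule closed under $p$-th powers via the cyclic-rotation argument for the ``freshman's dream'' modulo commutators, deduce that $T_0(A)\subseteq T_1(A)\subseteq\cdots$ is an ascending chain of $Z(A)$-submodules, identify $K(A)^\bot=Z(A)$ using non-degeneracy and associativity of the form, and then pass to orthogonals. All the individual steps check out, including the identity $a(ba)^{p-1}b=(ab)^p$ used to get $(ab)^p-(ba)^p\in K(A)$, and the reduction $(z,[a,b])=([z,a],b)$ via symmetry and associativity. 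So there is nothing in the paper to compare against, but the proposal supplies exactly the proof the authors had in mind.
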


We will often consider the factor rings
$\overline{Z}(A):=Z(A)/T_1^\bot(A)$, and their Jacobson
radicals. For an algebra $B$ we denote by $J(B)$ the Jacobson 
radical. 
\smallskip

We illustrate the above notions using the typical examples of  group
algebras.
%\begin{Ex}
Let $G$ be a finite group and  $A=kG$ the group algebra. 
Then $A$ is a symmetric
algebra via the following paring:\\
    $$(\ ,\ ): kG\times kG \rightarrow k~~,~~~~
  (g, h)=\delta_{g,h^{-1}}=\left\{ \begin{array} {ll}
                  1   &        \mbox{if} \  \mbox{$g=h^{-1}$} \\
                  0 & \mbox{otherwise}
\end{array}\right.$$
for $g,h\in G$ and extension by linearity.
Let $X$ be a subset of $G$. We introduce the following notations:
 $$X^+=\sum_{x\in  X}x, \ \text{and}\
 X^{p^{-n}}=\{g\in G:g^{p^n}\in X\}.$$
 The K\"ulshammer ideals $T_n^\bot(kG)$
admit a nice description (\cite{K2}, no.(38)): for every
$n\ge 0$, the vector space $T_n^\bot(kG)$  has a 
basis $\{ (C^{p^{-n}})^+: C\in \mathcal{C}l(G) \}$
where $\mathcal{C}l(G)$ denotes the set of conjugacy classes of $G$.

\begin{Ex} \label{ex-cyclic}
Let $k$ be an
algebraically closed field $k$ of characteristic 2. 
We are going to compute K\"ulshammer ideals
for a cyclic group $G=C_{2^m}=\langle g\rangle$ 
of order $2^m$ with $m\geq 1$. In particular we
shall look at the factor ring
$\overline{Z}(kG):=Z(kG)/T_1^\bot(kG)$ and its
Jacobson radical.
 
The first K\"ulshammer ideal $T_1^{\perp}(kC_{2^m})$ has a vector
space basis of the form 
$$\{(C^{2^{-1}})^+: C\in \mathcal{C}l(G) \} =
\{ g^{j}+g^{2^{m-1}+j}\,|\,0\le j\le 2^{m-1}-1\}.$$
In fact, for a conjugacy class $C=\{g^i\}$ of $G$, 
the set $C^{2^{-1}} = \{y\in G\,|\,y^2=g^i\}$
is empty for $i$ odd, and consists of $g^{i/2}$ and $g^{2^{m-1}+i/2}$
if $i$ is even. In particular,
$$\dim T_1^{\perp}(kC_{2^m}) = 2^{m-1}~~~\mbox{and also}~~~
\dim \overline{Z}(kG)=2^{m-1}.$$
For the Jacobson radical we then get
$$\dim
J(\overline{Z}(kG))=2^{m-1}-1~~,~~~\dim
J^2(\overline{Z}(kG))=\max(0, 2^{m-1}-2),
$$  
and
$$\dim
J(\overline{Z}(kG))/J^2(\overline{Z}(kG))=
\left\{ \begin{array}{ll} 1 & \mbox{if $m>1$} \\
                          0 & \mbox{if $m=1$}
\end{array} \right.
.$$ 

We will use these calculations in the third section.
\end{Ex}

\begin{Rem}
K\"ulshammer ideals are known to have good multiplicative properties.
Let $A$ and $B$ be two symmetric $k$-algebras. Then
$A\times B$ is also symmetric via the obvious  bilinear form and
$$Z(A\times B)\cong Z(A)\times Z(B)$$
$$T_n^\bot(A\times B)\cong T_n^\bot(A)\times T_n^\bot(B)$$
$$\overline{Z}(A\times B)\cong \overline{Z}(A)\times \overline{Z}(B)$$
$$J(\overline{Z}(A\times B))\cong J(\overline{Z}(A))\times J(\overline{Z}(B))$$
$$J(\overline{Z}(A\times B))/J^2(\overline{Z}(A\times B))\cong 
J(\overline{Z}(A))/J^2(\overline{Z}(A))\times
J(\overline{Z}(B))/J^2(\overline{Z}(B))$$
We leave the proof of these easy facts to the reader.
\end{Rem}

We can now state the theorem of A.~Zimmermann cited in the
introduction saying that the K\"ulshammer ideals are derived
invariants. 

\begin{Thm} (\cite{Z1}) Let $k$ be an algebraically closed field of
characteristic $p>0$. Let $A$ and $B$ be symmetric $k$-algebras. If
$A$ and $B$ are derived equivalent (i.e., their derived module
categories 
$\mathcal{D}^b(A)\simeq \mathcal{D}^b(B)$ are equivalent as 
triangulated categories), then there exists an
isomorphism  $\varphi : Z(A) \rightarrow Z(B)$ such that
$\varphi(T_n^\bot(A))=T_n^\bot(B)$ for any $n\geq 0$.
\end{Thm}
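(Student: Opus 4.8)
The plan is to reduce the statement to a compatibility of the center isomorphism produced by the derived equivalence with the ``K\"ulshammer power maps'', and then to check that compatibility on an explicit two-sided tilting complex. First recall that, $k$ being perfect, the $p$-th power map $a\mapsto a^{p}$ on $A$ induces an additive, $p$-semilinear map $F_{A}\colon A/K(A)\to A/K(A)$; this is well defined because $K(A)$ is closed under $p$-th powers in characteristic $p$. Since $Z(A)=K(A)^{\bot}$, the symmetrizing form restricts to a perfect pairing $Z(A)\times\bigl(A/K(A)\bigr)\to k$, and dualising $F_{A}$ through it yields an additive, $p^{-1}$-semilinear map $\kappa_{A}\colon Z(A)\to Z(A)$ characterised by $\langle\kappa_{A}(z),\overline{a}\rangle^{p}=\langle z,\overline{a^{p}}\rangle$ for all $a\in A$ --- this is K\"ulshammer's power map of \cite{K2}. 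Iterating gives $\langle\kappa_{A}^{\,n}(z),\overline{a}\rangle^{p^{n}}=\langle z,\overline{a^{p^{n}}}\rangle$, from which one reads off $\mathrm{im}(\kappa_{A}^{\,n})^{\bot}=\{\overline{a}\mid a^{p^{n}}\in K(A)\}=T_{n}(A)/K(A)$, and hence, taking orthogonals in the perfect pairing,
$$ T_{n}^{\bot}(A)=\mathrm{im}\bigl(\kappa_{A}^{\,n}\colon Z(A)\to Z(A)\bigr),\qquad n\ge 0. $$
(Consistently, for $A=kG$ the map $\kappa_{A}$ sends a class sum $C^{+}$ to $(C^{p^{-1}})^{+}$, recovering the basis of $T_{n}^{\bot}(kG)$ recalled above.) Therefore it is enough to produce an isomorphism $\varphi\colon Z(A)\to Z(B)$ with $\varphi\circ\kappa_{A}=\kappa_{B}\circ\varphi$: then $\varphi\bigl(\mathrm{im}\,\kappa_{A}^{\,n}\bigr)=\mathrm{im}\,\kappa_{B}^{\,n}$ for every $n$, which is precisely the assertion.

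The map $\varphi$ will be the isomorphism on $Z(A)=HH^{0}(A)$ induced by the derived equivalence. By Rickard's theory, the equivalence $\mathcal{D}^{b}(A)\simeq\mathcal{D}^{b}(B)$ is realised by a two-sided tilting complex $X$ of $A$-$B$-bimodules, which may be taken to be a bounded complex of bimodules finitely generated projective on each side, with inverse $Y$ satisfying $X\otimes_{B}^{\mathbf{L}}Y\simeq A$ and $Y\otimes_{A}^{\mathbf{L}}X\simeq B$ \cite{R1}, \cite{R2}. Derived tensoring with $X$ induces isomorphisms $HH^{\ast}(A)\cong HH^{\ast}(B)$ (in particular $\varphi\colon Z(A)\to Z(B)$) and $HH_{\ast}(A)\cong HH_{\ast}(B)$, Hochschild homology being a derived invariant \cite{R1}, \cite{Keller}; in degree $0$ the latter is an isomorphism $\psi\colon A/K(A)\to B/K(B)$. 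Because $A$ and $B$ are \emph{symmetric}, $X$ may be chosen self-dual (up to shift its inverse is its $k$-dual), so that under the identifications $Z(-)\cong\bigl((-)/K(-)\bigr)^{\ast}$ furnished by the symmetrizing forms the map $\varphi$ is the $k$-linear dual of $\psi^{-1}$; equivalently $\langle z,\psi^{-1}(\overline{b})\rangle_{A}=\langle\varphi(z),\overline{b}\rangle_{B}$. For a \emph{Morita} equivalence this step is elementary (compare \cite{HHKM}).

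It then remains to prove that $\psi$ intertwines the Frobenius maps, $\psi\circ F_{A}=F_{B}\circ\psi$. Granting this, one obtains $\varphi\circ\kappa_{A}=\kappa_{B}\circ\varphi$ by dualising and using the characterisation of $\kappa$ above, and the theorem follows. I expect \textbf{this Frobenius-compatibility to be the main obstacle.} Although the $p$-th power operation on $HH_{0}$ is natural, its invariance under a genuine (non-Morita) derived equivalence is delicate precisely because $X$ is a \emph{complex}: one has to describe $\psi$ on $HH_{0}=A/K(A)$ concretely, via the trace (Hattori--Stallings) map attached to the one-sided projective resolutions underlying $X$, and then verify that this trace map is compatible with the operation $\overline{a}\mapsto\overline{a^{p}}$. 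The key point is that the $p$-power map on $HH_{0}$ is itself built from an iterated multiplication $A^{\otimes p}\to A$ made cyclically invariant, so it is functorial for such trace maps in characteristic $p$; the remaining work is careful bookkeeping --- the signs produced by the shifts occurring in $X$, and the fact that $F$ is only defined modulo the commutator subspaces. Once this is in place, combining it with the identification of $\varphi$ as the dual of $\psi^{-1}$ and with $T_{n}^{\bot}=\mathrm{im}(\kappa^{n})$ completes the proof.
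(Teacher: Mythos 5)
The paper itself offers no proof of this statement; it is quoted verbatim from \cite{Z1}, so your proposal has to be judged against Zimmermann's argument. The reduction you set up is essentially the right one, and matches the structure of \cite{Z1}: $k$ being perfect, one identifies $T_n^\bot(A)$ with $\mathrm{im}(\kappa_A^n)$ where $\kappa_A$ is the $p^{-1}$-semilinear map on $Z(A)$ adjoint to the Frobenius $F_A$ on $A/K(A)$ under the perfect pairing furnished by the symmetrizing form; the derived equivalence, realised by a two-sided tilting complex whose inverse is its $k$-dual (this is where the hypothesis that $A$ and $B$ are \emph{symmetric} enters), gives compatible isomorphisms $\varphi\colon Z(A)\to Z(B)$ and $\psi\colon A/K(A)\to B/K(B)$; and the statement follows once one knows $\psi\circ F_A=F_B\circ\psi$. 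All of that is sound, and the preliminary verifications you sketch (closure of $K(A)$ under $p$-th powers, $T_n^\bot=\mathrm{im}(\kappa^n)$) are correct.

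The genuine gap is exactly where you flag it: you never actually prove the Frobenius compatibility $\psi\circ F_A=F_B\circ\psi$, you only argue that it ought to hold because the Hattori--Stallings trace ought to intertwine $p$-th powers. But this is not ``careful bookkeeping''; it \emph{is} Zimmermann's theorem. To make it a proof you would need, at minimum: (a) an explicit description of $\psi$ in terms of a chosen bounded complex $X$ of bimodules projective on each side, say as the (super)trace of right multiplication by $b\in B$ on the terms of $X$ viewed as left $A$-modules; (b) the lemma that for a finitely generated projective $A$-module $P$ and $f\in\mathrm{End}_A(P)$ one has $\mathrm{tr}(f^p)\equiv\mathrm{tr}(f)^p \pmod{K(A)}$ in characteristic $p$; (c) the extension of (b) to bounded complexes, where the trace is an alternating sum and one must check that the $p$-power map, which is not additive, still behaves correctly with respect to the direct-sum decomposition coming from the differentials and the signs; and (d) the verification that the resulting $\psi$ is indeed the inverse transpose of $\varphi$ under the two symmetrizing pairings, so that dualising the semilinear identity yields $\varphi\circ\kappa_A=\kappa_B\circ\varphi$. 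Steps (b)--(d) are the technical heart of \cite{Z1}, and step (c) in particular is not merely a sign chase because $F$ is $p$-semilinear rather than linear. Until those steps are supplied your write-up is a correct reduction plus a plausible heuristic, not a proof.
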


%Since Zimmermann proved this theorem, some applications have been
%achieved.
 %\begin{itemize}
% \item
 %Symmetric algebras of
% domestic type, Holm-Skowro\'{n}ski 2006

 %\item Deformed preprojective algebras of generalized Dynkin type,
%Holm-Zimmermann 2007, Sorlin-Zimermann 2007

%\item Algebras of dihedral and semidihedral type, Holm-Zimmermann
%2007

%\item Principal block of $PGL_2(q)$ for $q\equiv \pm 1 \
%\text{mod}\ 8$, Holm-Zhou 2008
%\end{itemize}

From now on, $k$ denotes
an algebraically closed field of characteristic 2.

For proving the main result of this paper it will be crucial
to be able to decide, given a particular dihedral block,
which scalar occurs in the relation. This can be read off from
the factor rings modulo the first K\"ulshammer ideals,  
by the following recent result of the first named author and 
A.\,Zimmermann. Recall that all blocks with dihedral defect 
group of order $2^n$ occur among the algebras $D(2\mathcal{A})^s(c)$ 
defined in the introduction. Note that the following result
only applies for dihedral defect groups of order at least 16. 

 \begin{Thm} (\cite{HZ}) \label{HZ} Let $s={2^{n-2}}$ with
$n\geq 4$. Denote $A^s_c=D(2\mathcal{A})^s(c)$ and
$\overline{Z}_c=Z(A^s_c)/T_1^\bot(A^s_c)$. Then
$$\dim J(\overline{Z}_0)/J^2(\overline{Z}_0)=3, \ \text{and}\
\dim J(\overline{Z}_1)/J^2(\overline{Z}_1)=2$$
\end{Thm}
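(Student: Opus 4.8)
The plan is to make the computation of the factor ring $\overline{Z}_c = Z(A^s_c)/T_1^\bot(A^s_c)$ completely explicit for both scalars $c=0$ and $c=1$, working throughout in the basic algebra $A^s_c = D(2\mathcal{A})^s(c)$ presented by the quiver with relations in the Introduction. First I would fix a $k$-basis of $A^s_c$ consisting of paths in the quiver modulo the relations $\gamma\beta=0$, $\alpha^2 = c(\alpha\beta\gamma)^s$, $(\alpha\beta\gamma)^s = (\beta\gamma\alpha)^s$; this is a standard computation (it appears in Erdmann's monograph \cite{E1}) and yields a finite, explicitly indexed basis, with the socle spanned by the single element $(\alpha\beta\gamma)^s = (\beta\gamma\alpha)^s$. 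From this basis I would write down a basis of the center $Z(A^s_c)$: a central element is a linear combination of the paths $e_0, e_1$ (vertices), powers of the cycles $\alpha\beta\gamma$ and $\beta\gamma\alpha$ at the two vertices, together with $\alpha$-twisted cycles, subject to commuting with $\alpha$, $\beta$, $\gamma$; one finds $\dim Z(A^s_c)$ and an explicit basis, with the two cases $c=0,1$ differing only through the relation $\alpha^2 = c(\alpha\beta\gamma)^s$.

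Next I would compute the commutator space $K(A^s_c)$ and hence $T_1(A^s_c) = \{x : x^2 \in K(A^s_c)\}$. Since we are in characteristic $2$, $K(A)$ is spanned by the $[p,q] = pq-qp$ for paths $p,q$, and $x \mapsto x^2$ is additive modulo $K(A)$ up to cross terms, so $T_1(A)$ is itself a subspace once one knows $K(A)$ and the squares of the basis elements. Then $T_1^\bot(A^s_c)$ is the orthogonal complement with respect to the symmetrizing form of $A^s_c$, which for these self-injective Nakayama-like algebras is determined by "reading off the socle coefficient" of a product; I would record the symmetrizing form on the chosen basis and compute $T_1^\bot(A^s_c)$ as a subspace of $Z(A^s_c)$. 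Dividing out gives $\overline{Z}_c$ explicitly as a commutative local $k$-algebra, presented by generators (the images of the central generators) and relations; the local structure makes $J(\overline{Z}_c)$ the augmentation ideal, and one computes $J^2$ by multiplying generators, finally reading off $\dim J(\overline{Z}_c)/J^2(\overline{Z}_c)$.

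The main obstacle I expect is bookkeeping rather than conceptual: one must carry the parameter $s = 2^{n-2}$ (with $n \geq 4$, so $s \geq 4$, which is exactly why the hypothesis $n\geq 4$ is needed — for smaller defect groups the relevant powers of the cycles degenerate and the center is too small to separate the cases) through all the path computations, and one must be careful that the $\alpha^2$ relation feeds into products such as $\alpha\cdot(\text{central element})$ and into the squares appearing in $T_1$. The decisive point where $c=0$ and $c=1$ part ways is the dimension count for $J/J^2$: in both cases $\dim\overline{Z}_c$ and $\dim J(\overline{Z}_c)$ turn out to be the same, but the extra relation $\alpha^2 = (\alpha\beta\gamma)^s$ present when $c=1$ forces one additional relation among the degree-one generators of $\overline{Z}_1$ modulo $J^2$, dropping $\dim J/J^2$ from $3$ to $2$. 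I would verify this by exhibiting, in each case, an explicit basis of $J(\overline{Z}_c)/J^2(\overline{Z}_c)$ — three elements when $c=0$, two when $c=1$ — thereby establishing the stated dimensions. Full details of this (somewhat lengthy) computation are carried out in \cite{HZ}; we only record the outcome here since it is all that is needed in the sequel.
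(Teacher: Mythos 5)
The paper does not prove this theorem at all: it is quoted verbatim from \cite{HZ} (Holm--Zimmermann) as a black box, and the present article only uses its conclusion. So there is no internal proof to compare against. That said, your sketch is consistent with how the cited reference actually proceeds: one writes down an explicit path basis of $A^s_c = D(2\mathcal{A})^s(c)$, computes $Z(A^s_c)$, the commutator space $K$, $T_1 = \{x : x^2 \in K\}$ and its orthogonal under the socle-reading symmetrizing form, and then inspects the resulting local quotient ring $\overline{Z}_c$; the relation $\alpha^2 = c(\alpha\beta\gamma)^s$ is precisely what collapses one generator of $J/J^2$ when $c=1$. Your reading of the hypothesis $n\geq 4$ is also correct and matches Remark 1.6 of the paper (defect $8$ gives too small a center). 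Two small cautions if you were to carry this out in full: these algebras are special biserial, not Nakayama, so be careful not to import Nakayama-specific facts about the form or the socle; and in characteristic $2$ the map $x \mapsto x^2$ is only \emph{semi}linear modulo $K(A)$, so verifying that $T_1(A)$ is a subspace requires checking that cross terms $xy + yx$ land in $K(A)$, which they do but should be stated. Since the paper itself defers to \cite{HZ}, citing the result as you do at the end is exactly what the paper does, and your outline is a faithful description of the omitted argument rather than a different route.
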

The main step of the proof of Theorem~\ref{main} is to calculate in
an undirect way the dimension of $
J(\overline{Z}(B_0))/J^2(\overline{Z}(B_0))$ for the principal block
$B_0$ of $kPGL_2(q)$.

%%%%%%%%%%%%%%%%%%%%%%%%%%%%%%%%%%%%%%%%%%%%%%%%%%%%%%%%%%%%%%%%%%%%%
%%%%%%%%%%%%%%%%%%%%%%%%%%%%%%%%%%%%%%%%%%%%%%%%%%%%%%%%%%%%%%%%%%%%%
\section{Some group-theoretic facts}
\label{Sec:groups}

In this section we collect some basic facts about projective
general linear groups. Most of them are well known, so we only
give some indications of proofs. 

Let $q=p^n$ be a prime power for $p$ an odd prime
number. The group $PGL_2(q)$ is defined as the factor group
of the general linear group $GL_2(q)$ over the finite field 
$\mathbb{F}_q$ modulo the center, i.e., the normal subgroup
of all scalar multiples of the identity matrix. In particular,
the group $PGL_2(q)$ has order $q(q+1)(q-1)$. 
  
Denote by $\sigma$ a generator of the multiplicative group
$\mathbb{F}_{q^2}^*$ of invertible elements of the finite field
$\mathbb{F}_{q^2}$. We set $\tau:=\sigma^{q+1}$; note that $\tau$
is a generator of $\mathbb{F}_{q}^*$. Moreover, we denote by $\epsilon$ 
(resp. $\eta$) a
$(q+1)$-th (resp. $(q-1)$-th) primitive root of $1$ in $\mathbb{C}$.

The first table gives the conjugacy  classes of $PGL_2(q)$ , where
$\lambda_1$ and $\lambda_2$ are two eigenvalues and where the last
column gives the order of the centralizer of a representative of a
conjugacy class.
\begin{table}[h]\caption{Conjugacy classes of  $PGL_2(q)$ with $q=p^n$
odd}
\label{table-conjclass}
\begin{tabular} {|c|c|c|}
\hline && \\
conjugacy  class $K$ &representative $x_K$ & $|C_G(x_K)|$ \\ &&\\
  \hline  \hline &&\\     $\lambda_1=\lambda_2 \in
  \mathbb{F}_q^*$ &&\\
  \text{semisimple}
& \raisebox{2.3ex}[0pt]{$A_1=\left[\begin{array}{cc}
1&0\\0&1\end{array}\right]$}&
    \raisebox{2.3ex}[0pt]{$q(q+1)(q-1)$}\\
 &&\\
        \hline &&\\
           $\lambda_1=\lambda_2 \in
  \mathbb{F}_q^*$ &&\\
  \text{nonsemisimple}
& \raisebox{2.3ex}[0pt]{$A_2=\left[\begin{array}{cc}
1&1\\0&1\end{array}\right]$}&
    \raisebox{2.3ex}[0pt]{$q$}\\
 &&\\
 \hline && \\
 $ \lambda_1\neq \pm\lambda_2  $  &$A_{3,i}=\left[\begin{array}{cc}
       1&0\\0&\tau^i\end{array}\right]$
 &  \\
 $ \lambda_1, \lambda_2  \in \mathbb{F}_q^*$  & $1\leq i\leq \frac{q-3}{2}$,
 & \raisebox{2.1ex}[0pt]{$q-1$}\\
 &&\\
 \hline && \\
 $    \lambda_1=-\lambda_2 \in \mathbb{F}_q^*$
 &
$A_{3, \frac{q-1}{2}}=\left[\begin{array}{cc}
 1&0\\0&-1\end{array}\right]$&  $2(q-1)$\\
  &&\\
\hline &&\\
$ \lambda_1\neq \pm\lambda_2$   &
  $A_{4, j}=\left[\begin{array}{cc} 0&
  -\sigma^{j(q+1)}\\1&\sigma^j+\sigma^{jq}\end{array}\right]$
   &   \\
   $ \lambda_1, \lambda_2 \in\mathbb{F}_{q^2}-\mathbb{F}_q$ &
    $1\leq j\leq
  \frac{q-1}{2}$,
   & \raisebox{2.1ex}[0pt]{$q+1$} \\
  &&\\
\hline &&\\
$ \lambda_1=-\lambda_2 \in\mathbb{F}_{q^2}-\mathbb{F}_q$ & $A_{4,
\frac{q+1}{2}}=\left[\begin{array}{cc}
  0&\tau\\1&0\end{array}\right]$& $2(q+1)$ \\
  &&\\
\hline
\end{tabular}\end{table}

The second table is the ordinary character table of of  $PGL_2(q)$,
where the characters in the last row come from cuspidal
representations. The character table of $GL_2(q)$ can be found in
\cite{PS} and we only need to find out all ordinary characters which
  factor through the subgroup formed by scalar matrices.
Note that our notations may differ slightly from the notations in
\cite{PS}. In total there are $q+2$ ordinary irreducible characters
for $PGL_2(q)$ (the same number as conjugacy classes, of course). 

\begin{table}[h]\caption{Character table of  $PGL_2(q)$ with $q=p^n$
odd}

\begin{tabular} {|c||c|c|c|c|c|c |}\hline & &&&&&\\
&&& $A_{3,i}$&& $A_{4,j}$&\\
    &\raisebox{2.3ex}[0pt]{$A_1$}   &\raisebox{2.3ex}[0pt]{$A_2$}&   $ 1\leq i\leq \frac{q-3}{2} $
  &  \raisebox{2.3ex}[0pt]{$A_{3, \frac{q-1}{2}}$} &  $ 1\leq j\leq \frac{q-1}{2} $  &
     \raisebox{2.3ex}[0pt]{$A_{4, \frac{q+1}{2}}$ } \\ \hline
\hline & &&&&&\\   $1_G$ & 1& 1& 1&1&1&1\\
 \hline & &&&&&\\    $\theta$ & $q$ & 0&1& 1& -1& -1\\
 \hline & &&&&&\\    $sgn$ & 1& 1& $(-1)^i$&  $(-1)^{\frac{q-1}{2}}$ & $(-1)^j$& $(-1)^{\frac{q+1}{2}}$ \\
  \hline & &&&&&\\  $\theta\otimes sgn$ & $q$& 0&$(-1)^i$&  $(-1)^{\frac{q-1}{2}}$ & $(-1)^{j-1}$& $(-1)^{\frac{q-1}{2}}$\\
   &&&&&&\\   \hline &&&&&&\\
    $\mu_s$&&&&&&\\
    $ 1\leq s\leq \frac{q-3}{2}$  & \raisebox{2.3ex}[0pt]{$q+1$}& \raisebox{2.3ex}[0pt]{1}&
    \raisebox{2.3ex}[0pt]{$\eta^{si}+\eta^{-si}$} &\raisebox{2.3ex}[0pt]{$2(-1)^s$}&
    \raisebox{2.3ex}[0pt]{0}&\raisebox{2.3ex}[0pt]{0}\\
   &&&&&&\\  \hline &&&&&&\\
     $\chi_k$&&&&&&\\
      $ 1\leq k \leq  \frac{q-1}{2}$  & \raisebox{2.3ex}[0pt]{$q-1$}& \raisebox{2.3ex}[0pt]{-1}&
      \raisebox{2.3ex}[0pt]{0} &
 \raisebox{2.3ex}[0pt]{0}&\raisebox{2.3ex}[0pt]{$-\epsilon^{jk}-\epsilon^{-jk}$}&
 \raisebox{2.3ex}[0pt]{$-2(-1)^k$}\\ \hline
\end{tabular}\end{table}

%\medskip
%{\tt should we be a bit more precise about the other case; e.g. are all
%arguments and formulae which follow really exactly the same???}
%\medskip

The $2$-Sylow subgroups of $PGL_2(q)$ are known to be
dihedral groups. As
$|PGL_2(q)|=q(q+1)(q-1)$, if $2$-Sylow subgroups are of order $2^n$
with $n\geq 2$ (i.e. the principal block is of defect $n$), then we
can write $q-1=2^{n-1}q'$ with $q'$ odd.
Table~\ref{regular} gives all $2$-regular conjugacy classes (i.e.
conjugacy classes consisting of elements whose order is not divisible
by $2$). 

\begin{table}[h]\caption{$2$-regular conjugacy classes of 
$PGL_2(q)$ with $q=p^n
\equiv 1\ \text{mod}\ 8$}\label{regular}

\begin{tabular} {|c|c|} \hline &\\
conjugacy class  $K$   &representative  $x_K$ \\
  \hline \hline &\\ 
$\lambda_1=\lambda_2  \in \mathbb{F}_q^*, \text{ semisimple} $
    & $A_1=\left[\begin{array}{cc} 1&0\\0&1\end{array}\right]$ \\

      \hline & \\
        $\lambda_1=\lambda_2 \in \mathbb{F}_q^*$, \text{nonsemisimple} 
&$A_2=\left[\begin{array}{cc} 1&1\\0&1\end{array}\right]$ \\
 \hline & \\    $ \lambda_1\neq \lambda_2 \in \mathbb{F}_q^*$  &
 $A_{3,2^{n-1}i'}=\left[\begin{array}{cc}
       1&0\\0&\tau^{2^{n-1}i'}\end{array}\right]$, $1\leq i'\leq
       \frac{q'-1}{2}$
 \\

\hline & \\  $ \lambda_1\neq \lambda_2
\in\mathbb{F}_{q^2}-\mathbb{F}_q$ &
  $A_{4, 2j'}=\left[\begin{array}{cc} 0&
  -\sigma^{2j'(q+1)}\\1&\sigma^{2j'}+\sigma^{2j'q}\end{array}\right]$, $1\leq j'\leq
  \frac{q-1}{4}$   \\ \hline

\end{tabular}\end{table}

One can then use ordinary characters to determine the block
structure of the group algebra 
$kPGL_2(q)$. We will have to distribute the ordinary irreducible
characters into $2$-blocks. This can be done using the following
well-known criterion (see for instance \cite[(3.19) THEOREM]{Nav}):  
{\em Two ordinary characters $\chi$ and $ \psi$ lie in the
same $2$-block if $$\sum_{x\in G^0}\chi(x)\psi(x^{-1})\neq 0$$ where
$G^0$ is the set of all $2$-regular elements.}

Furthermore, a block with dihedral defect group of order $2^n$ 
has precisely $2^{n-2}+3$ ordinary irreducible characters
(see for example, \cite[V.5.10 COROLLARY]{E1}). 

Table~\ref{block} gives the list of all $2$-blocks.  
Recall that the number $q'$ comes from the factorization
$q-1=2^{n-1}q'$. 
In this table
and in the sequel, $\sim $ means Morita equivalence. These
Morita equivalences can be deduced from a theorem of L.Puig \cite{P};
in fact the cyclic blocks occurring have only one simple module 
(i.e. one modular irreducible Brauer character), hence are 
nilpotent, and then by Puig's theorem they are Morita equivalent to the
group algebra of their defect groups.

\begin{table}[t]\caption{$2$-blocks for  $PGL_2(q)$ with 
$q=p^n \equiv  1\ \text{mod}\ 8$}\label{block}

\begin{tabular} {|c|c|c |}
\hline &&\\
 $2$-block & description & ordinary characters\\
  \hline \hline &&\\   principal block
  & dihedral block  &   $1_G, \theta, sgn, \theta\otimes sgn,$     \\
    $B_0 $
  &  of defect  $n$ &   $   \mu_{q't}, 1\leq t\leq 2^{n-2}-1,$  \\
      &&\\
    \hline   &&\\
          $B_{3,s} \sim kC_{2^{n-1}}$ &
          cyclic block &
    $ \mu_{q't-s}, 1\leq t\leq 2^{n-2} $  \\
 $ 1\leq s \leq \frac{q'-1}{2}$ &
            of defect $n-1$ &
    $   \mu_{q't+s}, 0\leq t\leq 2^{n-2}-1$  \\
     &&\\  \hline&&\\
       $B_{4,u}\sim kC_{2} $  & cyclic block    & \\
  $1\leq u\leq \frac{q-1}{4}$  &   of defect $1$   &\raisebox{2.3ex}[0pt]{$\chi_u,
  \chi_{\frac{q+1}{2}-u}$}\\ \hline
\end{tabular}

\end{table}

%%%%%%%%%%%%%%%%%%%%%%%%%%%%%%%%%%%%%%%%%%%%%%%%%%%%%%%%%%%%%%%%%%
%%%%%%%%%%%%%%%%%%%%%%%%%%%%%%%%%%%%%%%%%%%%%%%%%%%%%%%%%%%%%%%%%%
\section{Proof of the main theorem}
\label{Sec:mainproof}

In this section we shall closely look at the K\"ulshammer ideals
for the principal block of the group algebra of $PGL_2(q)$.
In particular, we shall prove our main result Theorem \ref{main}.
However, the proof of a key result, which is of a technical
nature, is postponed to the next section. 

By abuse of notation, we will freely use the notations for 
representatives of the conjugacy classes (like e.g. $A_{3,i}$)
now also for the entire conjugay class. Therefore, we denote
conjugay class sums by e.g. $A_{3,i}^+$, and also  
its image in the quotient
$\overline{Z}(kG)=Z(kG)/T_1^{\perp}(kG)$.

Recall K\"ulshammer's nice description (\cite{K2} no.(38))
of the ideals $T_1^{\perp}(kG)$
for group algebras $kG$. This has been mentioned already in 
Example \ref{ex-cyclic}; we restate it here for the special 
case we are needing: 
$T_1^\bot(kG)$ is the
vector space with basis $\{ (K^{2^{-1}})^+: K\in \mathcal{C}l(G) \}$
where $\mathcal{C}l(G)$ the set of conjugacy classes of $G$,
and where 
$$K^{2^{-1}} = \{ g\in G\,|\,g^2\in K\}.$$
Note that the sets $K^{2^{-1}}$ are closed under conjugation in $G$,
i.e. it suffices to consider the representatives of the conjugacy classes
from Table \ref{table-conjclass}.

Easy calculations then give
Table~\ref{ortho} below, 
describing the sets $K^{2^{-1}}$ for the groups $PGL_2(q)$. 

\begin{table}[ht]\caption{Computing $T_1^\bot (kG)$}\label{ortho}

\begin{tabular} {|c|c |} \hline &\\
 Conjugacy class  $K$ &$K^{2^{-1}}$\\
\hline \hline &\\$A_1$ &   $A_1$, $A_{3, \frac{q-1}{2}}$, $A_{4, \frac{q+1}{2}}$\\
&\\
 \hline &\\ $A_2$&  $A_2$\\
 &\\\hline &\\ $A_{3,i}$, $ 1\leq i\leq \frac{q-3}{2}, \text{even} $& $A_{3,\frac{i}{2}}$,
   $A_{3,\frac{q-1-i}{2}}$\\
 &\\\hline &\\$A_{3,i}$, $ 1\leq  i\leq \frac{q-3}{2}$,   \text{odd}   & $\emptyset$ \\
&\\\hline &\\  $A_{3, \frac{q-1}{2}}$ & $A_{3, \frac{q-1}{4}}$\\
 &\\ \hline &\\$A_{4,j}$, $ 1\leq j\leq \frac{q-1}{2}, \text{even} $  &
   $A_{4,\frac{j}{2}}$,
   $A_{4,\frac{q+1-j}{2}}$\\
  &\\\hline &\\ $A_{4,j}$, $ 1\leq j\leq \frac{q-1}{2}, \text{odd}  $  & $\emptyset$ \\
 &\\\hline &\\$A_{4, \frac{q+1}{2}}$ & $\emptyset$ \\ \hline

\end{tabular}\end{table}

From the results listed in Table~\ref{ortho}, and K\"ulshammer's
description of the ideals $T_1^{\perp}(kG)$ above, it
is easy to deduce the following result. Note that part
$(i)$ is just the well-known fact that the conjugacy class sums 
form a basis of the center of a group algebra. 

\begin{Lem} \label{lem-dim}
Let $G=PGL_2(q)$ where $q\equiv 1\mod 8$. 
\begin{itemize}
 \item[(i)] The center has $\dim Z(kG)=q+2$ and a basis of it is given by
$$\{A_1^+; A_2^+; A_{3,i}^+, 
1\leq i\leq \frac{q-3}{2};
  A_{3, \frac{q-1}{2}}^+;  A_{4,j}^+,  1\leq j\leq \frac{q-1}{2}; A_{4,
  \frac{q+1}{2}}^+\}.$$
  \item[(ii)] The first K\"ulshammer ideal has
$\dim T_1^\bot (kG)=\frac{q+3}{2} $ and a basis of it is given by
$$\{A_1^++A_{3, \frac{q-1}{2}}^++A_{4, \frac{q+1}{2}}^+;
  A_2^+; A_{3,i}^++A_{3,\frac{q-1}{2}-i}^+, 1\leq i\leq \frac{q-5}{4}; A_{3,
  \frac{q-1}{4}}^+;$$ $$
          A_{4,j}^++A_{4,\frac{q+1}{2}-j}^+, 1\leq j\leq
          \frac{q-1}{4}\}.$$
    \item[(iii)] The factor ring has 
$\dim Z(kG)/T_1^\bot (kG)=\frac{q+1}{2}$ and a basis of 
     it is  given by 
$$\{A_1^+;  A_{3,
    \frac{q-1}{2}}^+;
    A_{3,i}^+, 1\leq i\leq \frac{q-5}{4};  A_{4,j}^+, 1\leq j\leq
    \frac{q-1}{4}
    \}.$$
\end{itemize} $\hfill\Box$
\end{Lem}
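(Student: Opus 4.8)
The plan is to read everything off Tables~\ref{table-conjclass} and~\ref{ortho} together with K\"ulshammer's description of $T_1^\bot(kG)$, and then do linear algebra. For part $(i)$ there is nothing to prove beyond invoking the standard fact that conjugacy class sums form a basis of $Z(kG)$; the count $q+2$ is exactly the number of rows in Table~\ref{table-conjclass} (namely $1+1+\frac{q-3}{2}+1+\frac{q-1}{2}+1=q+2$), and the listed elements are precisely those class sums.

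For part $(ii)$, the idea is that a spanning set for $T_1^\bot(kG)$ is $\{(K^{2^{-1}})^+ : K\in \mathcal{C}l(G)\}$, and Table~\ref{ortho} tells us each such element explicitly as a sum of class sums. First I would discard the classes $K$ with $K^{2^{-1}}=\emptyset$ (the odd $A_{3,i}$, the odd $A_{4,j}$, and $A_{4,\frac{q+1}{2}}$), since they contribute $0$. Among the rest, $A_1$ gives $A_1^+ + A_{3,\frac{q-1}{2}}^+ + A_{4,\frac{q+1}{2}}^+$; $A_2$ gives $A_2^+$; $A_{3,\frac{q-1}{2}}$ gives $A_{3,\frac{q-1}{4}}^+$; each even $A_{3,i}$ with $i=2i'$ gives $A_{3,i'}^+ + A_{3,\frac{q-1}{2}-i'}^+$, and as $i$ runs over even values in $[1,\frac{q-3}{2}]$ the index $i'=i/2$ runs over $[1,\frac{q-1}{4}]$, giving the pairs listed (with the middle index $\frac{q-1}{4}$ producing $2A_{3,\frac{q-1}{4}}^+=0$ in characteristic $2$, hence dropped — this is why the range is $1\le i\le \frac{q-5}{4}$); similarly each even $A_{4,j}$ yields $A_{4,j'}^+ + A_{4,\frac{q+1}{2}-j'}^+$ for $j'\in[1,\frac{q-1}{4}]$. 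I would then observe that these elements are linearly independent: order the class-sum basis of $Z(kG)$ appropriately and note that $A_2^+$, $A_{3,\frac{q-1}{4}}^+$, the pair-sums $A_{3,i}^+ + A_{3,\frac{q-1}{2}-i}^+$ for $1\le i\le \frac{q-5}{4}$, the pair-sums $A_{4,j}^+ + A_{4,\frac{q+1}{2}-j}^+$ for $1\le j\le\frac{q-1}{4}$, and the element involving $A_1^+$ each introduce a new basis vector not appearing in the earlier ones (e.g. the $A_{4}$-pairs are the only ones involving $A_{4,\ast}^+$, the $A_3$-pairs the only ones involving the off-middle $A_{3,\ast}^+$ with small index, etc.), so a triangularity argument gives independence. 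Counting: $1 + 1 + \frac{q-5}{4} + 1 + \frac{q-1}{4} = \frac{q+3}{2}$, as claimed.

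For part $(iii)$, since $T_1^\bot(kG)\subseteq Z(kG)$, we have $\dim Z(kG)/T_1^\bot(kG) = (q+2) - \frac{q+3}{2} = \frac{q+1}{2}$. To exhibit the stated basis of the quotient I would check that the $\frac{q+1}{2}$ class sums $\{A_1^+;\ A_{3,\frac{q-1}{2}}^+;\ A_{3,i}^+,\ 1\le i\le\frac{q-5}{4};\ A_{4,j}^+,\ 1\le j\le\frac{q-1}{4}\}$ together with the $\frac{q+3}{2}$ basis elements of $T_1^\bot(kG)$ from part $(ii)$ span all of $Z(kG)$ — equivalently, that every remaining class-sum basis vector ($A_2^+$, the odd $A_{3,i}^+$, the high-index $A_{3,i}^+$ with $\frac{q-3}{4}\le i\le\frac{q-3}{2}$, similarly for $A_{4,j}^+$ and $A_{4,\frac{q+1}{2}}^+$) lies in the span modulo $T_1^\bot$: e.g. $A_2^+\in T_1^\bot$; $A_{4,\frac{q+1}{2}}^+ \equiv A_1^+ + A_{3,\frac{q-1}{2}}^+ \pmod{T_1^\bot}$ from the first relation; $A_{3,\frac{q-1}{2}-i}^+ \equiv A_{3,i}^+$ and $A_{4,\frac{q+1}{2}-j}^+\equiv A_{4,j}^+$ from the pair relations; and the $A_{3,\frac{q-1}{4}}^+$ relation removes that one too. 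A dimension count then forces this to be a basis.

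The main obstacle, such as it is, is purely bookkeeping: keeping the index ranges straight (in particular noticing that the "middle" cases $i=\frac{q-1}{4}$ contribute $2x=0$ in characteristic $2$ and must be excluded, which is exactly what shifts the range from $\frac{q-3}{4}$ to $\frac{q-5}{4}$), and organizing the class sums into an order that makes the triangularity/linear-independence argument transparent. There is no conceptual difficulty — once Table~\ref{ortho} is in hand, this is a finite linear-algebra verification over $k$, and the stated dimensions $q+2$, $\frac{q+3}{2}$, $\frac{q+1}{2}$ are consistent with $(q+2)=\frac{q+3}{2}+\frac{q+1}{2}$, which is a useful sanity check throughout. $\hfill\Box$
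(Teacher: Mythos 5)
Your overall approach — read off Table~\ref{ortho}, form the spanning set $\{(K^{2^{-1}})^+\}$, check linear independence by triangularity, and get~(iii) by a dimension count — is exactly what the paper intends (the paper gives no proof at all beyond pointing at Table~\ref{ortho}, with the boxed symbol signaling the deduction is routine), and your final answer is right. However, one piece of the bookkeeping in part~(ii) is factually wrong, though harmlessly so: when $i$ runs over even values in $[1,\tfrac{q-3}{2}]$, the largest such $i$ is $\tfrac{q-5}{2}$ (since $q\equiv 1\pmod 8$ makes $\tfrac{q-3}{2}$ odd), so $i'=i/2$ runs over $[1,\tfrac{q-5}{4}]$ and never reaches $\tfrac{q-1}{4}$. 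There is no ``middle index $\tfrac{q-1}{4}$ producing $2A_{3,\frac{q-1}{4}}^+=0$'' phenomenon here: the value $i'=\tfrac{q-1}{4}$ would come from $i=\tfrac{q-1}{2}$, which is the separately listed class $A_{3,\frac{q-1}{2}}$, and its $K^{2^{-1}}$ is the single class $A_{3,\frac{q-1}{4}}$ (a set, not a multiset), contributing the nonzero element $A_{3,\frac{q-1}{4}}^+$ — which you also (correctly) include from that separate line. So the spanning set you write down is the right one, but your parenthetical explanation for the range $1\le i\le\tfrac{q-5}{4}$ should simply be ``because $\tfrac{q-3}{2}$ is odd,'' not a characteristic-$2$ cancellation.
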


Before proceeding to the proof of our main result
Theorem~\ref{main}, we consider a
small example.
\begin{Ex} Let $q=3^2=9$. Then $n=4$, $q'=1$, $\frac{q'-1}{2}=0$ and
 $$kG=kPGL_2(9)\sim B_0 \oplus (kC_2)^{\oplus 2}.$$
From the preceding lemma, we can read off the entries
of the following Table~\ref{PGL_2(9)}. For the last column on cyclic blocks
the entries have been computed in Example \ref{ex-cyclic}.

\begin{table}[h]\caption{$PGL_2(9)$} \label{PGL_2(9)} 
\begin{tabular}{|c||c|c|c| }

   \hline &&&\\

&$kG$&      $B_0 $ &     $(kC_2)^{2}$ \\

  \hline \hline &&&\\  \text{center} $Z$&  $11$ &  $? $   &$4$       \\

\hline &&&\\  $ T_1^\bot $& $6$ & ? &  $2$\\

\hline &&&\\ $\overline{Z}=Z/T_1^\bot $& $5$ & ? &  $2$\\ \hline

\end{tabular}\end{table}
One obtains that 
$\dim Z(B_0)/T_1^\bot(B_0)=5-2=3$ and then in particular
$$\dim J(\overline{Z(B_0)})/J^2(\overline{Z(B_0)})\leq 2.$$ 
This means that the principal $2$-block of $PGL_2(9)$ 
has scalar $c=1$, by Theorem~\ref{HZ}.\end{Ex}

For proving Theorem~\ref{main}, one needs to
compute the Jacobson radical and its square of the factor ring
$\overline{Z}:=Z(kG)/T_1^\bot (kG)$ for the group $G=PGL_2(q)$. 
The key step is the following proposition 
whose (technical) proof is
postponed to the next section (see Corollary~\ref{radical} and
Proposition~\ref{radicalsquare}).

\begin{Prop}[Key step] \label{key} With the above notations we have
$$\dim J(\overline{Z})= \frac{q-1}{4}-\frac{q'-1}{2}\,\,\,\,\,\,\,\,\,\,and
\,\,\,\,\,\,\,\,
\dim J^2(\overline{Z})= \frac{q-1}{4}-(q'+1).$$
\end{Prop}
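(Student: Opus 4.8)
The plan is to work entirely inside the factor ring $\overline{Z} = Z(kG)/T_1^\bot(kG)$ using the explicit basis from Lemma~\ref{lem-dim}(iii), namely
$$\{A_1^+;\ A_{3,\frac{q-1}{2}}^+;\ A_{3,i}^+,\ 1\le i\le \tfrac{q-5}{4};\ A_{4,j}^+,\ 1\le j\le \tfrac{q-1}{4}\}.$$
Since $\overline{Z}$ is a commutative local $k$-algebra (the center of a connected group algebra localized at its unique maximal ideal, then quotiented by an ideal inside the radical), its Jacobson radical $J(\overline{Z})$ is spanned by the images of all basis elements except one ``unit representative'', and $\dim J(\overline{Z}) = \dim\overline{Z} - 1 = \frac{q-1}{2} - 1$, but one must be careful: the claimed answer $\frac{q-1}{4} - \frac{q'-1}{2}$ is smaller, which means many of the class sums $A_{3,i}^+$, $A_{4,j}^+$ already lie in $J^2$ or are dependent modulo $J^2$. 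So the real content is to compute the multiplication in $\overline{Z}$ and then read off $J/J^2$ and $J^2$. First I would set up the multiplication rules for conjugacy class sums in $kG$ (the structure constants $A_K^+ A_L^+ = \sum_M a_{KLM} A_M^+$), specialized to characteristic $2$, and then reduce these relations modulo $T_1^\bot(kG)$ using Table~\ref{ortho}; the key simplification is that in characteristic $2$ many structure constants vanish and the pairs $A_{3,i}^+ + A_{3,\frac{q-1}{2}-i}^+$ etc. are killed, so products collapse substantially.

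The technical heart, which the paper defers to Section~\ref{Sec:keyprop} (Corollary~\ref{radical} and Proposition~\ref{radicalsquare}), is to identify a generating set for $J(\overline{Z})$ as an ideal and to compute $J^2$. I expect the structure to be: a small number of ``primitive'' class sums (likely certain $A_{4,j}^+$ together with $A_{3,\frac{q-1}{2}}^+$, or the generator of the torus part) generate $J$ as an algebra, while the remaining basis elements $A_{3,i}^+$ for $1\le i\le \frac{q-5}{4}$ — there are $\frac{q-5}{4}$ of them — turn out to be expressible via products, hence lie in $J^2$; a counting argument then gives
$$\dim J/J^2 = \big(\tfrac{q-1}{2} - 1\big) - \#\{\text{basis elements landing in }J^2\},$$
and after subtracting one gets $\dim J = \frac{q-1}{4} - \frac{q'-1}{2}$ and $\dim J^2 = \frac{q-1}{4} - (q'+1)$, so that $\dim J/J^2 = \frac{q'-1}{2} + 1 = \frac{q'+1}{2}$. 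The explicit appearance of $q'$ (from $q-1 = 2^{n-1}q'$) in the formula signals that the ``extra'' independent generators of $J/J^2$ beyond the single generator one would get in the genuinely dihedral-block case come precisely from the $B_{3,s}$ cyclic-block directions, i.e. from the odd part $q'$ of $q-1$; one cross-check is that summing the contributions of $B_0$, the $\frac{q'-1}{2}$ cyclic blocks $B_{3,s}$, and the $\frac{q-1}{4}$ blocks $B_{4,u}$ via the block decomposition $kG \sim B_0 \oplus \bigoplus B_{3,s} \oplus \bigoplus B_{4,u}$ and the multiplicativity of $\overline{Z}$, $J$, $J^2$ must reproduce the numbers computed directly for $kG$ from Lemma~\ref{lem-dim}; this consistency both guides and verifies the computation.

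Concretely, the steps in order: (1) write down the conjugacy-class-sum multiplication table for $PGL_2(q)$ restricted to the classes appearing, using double-coset / counting formulas for structure constants, reduced mod $2$; (2) pass to $\overline{Z}$ via Table~\ref{ortho}, obtaining a presentation of $\overline{Z}$ by generators and relations; (3) show $J(\overline{Z})$ is generated as an ideal by an explicit short list of class sums and compute its dimension by showing the remaining $A_{3,i}^+$'s and $A_{4,j}^+$'s are either zero or lie in the ideal generated by that list — this is the dimension count yielding $\frac{q-1}{4} - \frac{q'-1}{2}$; (4) compute $J^2$ by squaring that generating set and identifying which basis vectors survive, yielding $\frac{q-1}{4} - (q'+1)$; (5) sanity-check against the block decomposition and against the $q=9$ example already worked out. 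The main obstacle will be step (1)–(2): getting the structure constants of the class algebra of $PGL_2(q)$ explicitly enough (especially products involving the nonsplit-torus classes $A_{4,j}$ and the split-torus classes $A_{3,i}$, where the relevant counts involve solutions of norm/trace equations over $\mathbb{F}_{q^2}$) and controlling them modulo $2$ and modulo $T_1^\bot$; once that bookkeeping is done the radical computation should be a relatively clean linear-algebra exercise.
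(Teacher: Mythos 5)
Your overall strategy is the same as the paper's: write down the multiplication of conjugacy class sums in $Z(kG)$, reduce modulo $2$ and modulo $T_1^\bot(kG)$ using Table~\ref{ortho}, and then exhibit explicit spanning sets for $J(\overline{Z})$ and $J^2(\overline{Z})$ by identifying nilpotent elements and products. That is exactly what Section~\ref{Sec:keyprop} does via Propositions~\ref{square} and~\ref{squareII} (the structure-constant computations) and Propositions~\ref{radical} and~\ref{radicalsquare} (the explicit bases). So the plan is the right one.

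However, there are two substantive gaps in your argument. First, you describe $\overline{Z}$ as a commutative \emph{local} $k$-algebra, ``the center of a connected group algebra localized at its unique maximal ideal.'' This is not correct: $kG$ is not local, $Z(kG)$ is a direct product of local rings, one for each $2$-block, and the same is true of $\overline{Z}(kG)$. Consequently the correct upper bound is $\dim J(\overline{Z}) \le \dim\overline{Z} - (\text{number of blocks})$, not $\dim\overline{Z}-1$; you noticed the numbers did not match but attributed the discrepancy to ``class sums lying in $J^2$,'' which explains a drop in $\dim J/J^2$ but not in $\dim J$. The product structure is load-bearing: it is precisely the inequality $\dim J(\overline{Z}) \le \frac{q+1}{2} - 1 - \frac{q'-1}{2} - \frac{q-1}{4}$ coming from counting blocks that lets the paper conclude the elements exhibited in Proposition~\ref{radical} actually form a \emph{basis} of $J$, rather than just lie in it. Relatedly, your guess that certain $A_{4,j}^+$ serve as ``primitive'' generators of $J$ is off: the blocks $B_{4,u}$ are Morita equivalent to $kC_2$, for which $J(\overline{Z}(kC_2)) = 0$, so the $A_{4,j}^+$ contribute nothing to the radical.

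Second, you treat the block decomposition as a ``cross-check'' and do not mention Theorem~\ref{HZ} at all, but the paper uses both as essential ingredients to pin down $\dim J^2$. Proposition~\ref{radicalsquare} only directly establishes the lower bound $\dim J^2(\overline{Z}) \ge \frac{q-1}{4} - (q'+1)$ by producing elements of $J^2$; combined with the dimension of $J$ already computed, this yields $\dim J(\overline{Z}(B_0))/J^2(\overline{Z}(B_0)) \le 2$, and it is Theorem~\ref{HZ} (which forces this dimension to be either $2$ or $3$) that turns the inequality into an equality. Your plan --- ``squaring that generating set and identifying which basis vectors survive'' --- would, to establish the matching upper bound on $\dim J^2$, require a complete multiplication table for all pairs of radical basis elements. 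That is a genuinely harder computation than what the paper does, and as written you have not identified this as a problem to be solved, let alone how to solve it.
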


\medskip
We now give the proof of theorem \ref{main} using the above key
proposition. 
Recall from Table \ref{block} 
the block decomposition, up to Morita equivalence:
$$kPGL_2(q) \sim B_0 \oplus (kC_{2^{n-1}})^{\oplus \frac{q'-1}{2}} 
\oplus (kC_{2})^{\oplus \frac{q-1}{4}}.$$ 

We collect the necessary information on the blocks and their
K\"ulshammer ideals in the following Table~\ref{complete};
the numbers in the table are dimensions. The entries for $kG$ 
are from the above key proposition
and Lemma \ref{lem-dim}, the entries in the last two columns
on cyclic blocks have been computed in Example \ref{ex-cyclic}. 

\begin{table}[ht]\caption{}\label{complete}
\begin{tabular}{|c||c|c|c|c |}

   \hline &$kG$&      $B_0 $ &   $(kC_{2^{n-1}})^{\oplus \frac{q'-1}{2}} $ & 
    $(kC_2)^{\frac{q-1}{4}}$ \\
\hline 
  \hline \text{center} $Z$&  $q+2$ & $2^{n-2}+3$ &$2^{n-1} \times \frac{q'-1}{2}$   
&$2\times \frac{q-1}{4}$       \\

\hline $\overline{Z}=Z/T_1^\bot $& $\frac{q+1}{2}$ & ? &  $2^{n-2}
\times \frac{q'-1}{2}$ &
$1\times \frac{q-1}{4}$\\

\hline $J(\overline{Z})$& $\frac{q-1}{4}-\frac{q'-1}{2}$ & ? &
$(2^{n-2}-1) \times \frac{q'-1}{2}$ &
$0$\\
\hline $J^2(\overline{Z})$& $\frac{q-1}{4}-(q'+1)$ & ? &
$(2^{n-2}-2) \times \frac{q'-1}{2}$ &
$0$\\
\hline $J(\overline{Z})/J^2(\overline{Z})$& $\frac{q'+3}{2}$ & ? & $
1\times \frac{q'-1}{2} $ & $0$\\ \hline

\end{tabular}\end{table}

From this table, one easily computes that  
$$\dim J(\overline{Z(B_0)})/J^2(\overline{Z(B_0)})=
\frac{q'+3}{2} - \frac{q'-1}{2} = 2.$$ 
Now using  
Theorem~\ref{HZ}, we can deduce that the scalar for the principal $2$-block
of $PGL_2(q)$ is indeed $c=1$, thus proving
the main result Theorem \ref{main}.

%%%%%%%%%%%%%%%%%%%%%%%%%%%%%%%%%%%%%%%%%%%%%%%%%%%%%%%%%%%%%%%%%%%%%%%%%%%
%%%%%%%%%%%%%%%%%%%%%%%%%%%%%%%%%%%%%%%%%%%%%%%%%%%%%%%%%%%%%%%%%%%%%%%%%%%
\section{Proof of the key proposition}
\label{Sec:keyprop}

Throughout this
section, we work in the quotient algebra 
$\overline{Z}(kG)=Z(kG)/T_1^{\perp}(kG)$. 
We will exhibit explicit basis for the radical $J(\overline{Z}(kG))$ and
the radical square $J^2(\overline{Z}(kG))$. For notational 
convenience, we allow the index $i$ in $A_{3, i}$ to 
be an arbitrary integer (note that by definition of the representative
$A_{3,i}$, the index can be taken modulo $q-1$, and moreover in 
$PGL_2(q)$ we have that $A_{3, i}=A_{3, q-1\pm i}$). 
The same convention applies to the index $j$ in $A_{4, j}$.

For the computations in this section, always keep in mind that
we are working in characteristic 2. 

\begin{Prop} \label{square} For all $i\in\mathbb{Z}$ the following 
equation holds in $\overline{Z}(kG)$:
$$(A_{3, i}^+)^2=\left\{ \begin{array} {r@{ ,  \quad }l}
                  A_{3, 2i}^+   &        \text{if }\    \frac{q-1}{4} \nmid i \\
                  0 &  \text{if}\    i=u \frac{q-1}{4}\   \text{with}\  u\  \text{odd}\\
                  A_{1}^+ &\text{if}\    i=u \frac{q-1}{4}\   \text{with}\  u \
                  \text{even}
\end{array}\right.$$

\end{Prop}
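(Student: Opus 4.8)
The plan is to compute the square $(A_{3,i}^+)^2$ in the center $Z(kG)$ first, as a linear combination of conjugacy class sums, and then reduce modulo $T_1^\perp(kG)$ using the basis from Lemma~\ref{lem-dim}(ii). For the computation in $Z(kG)$, I would use the standard structure constants of the center of a group algebra: for conjugacy classes $K, L$ with representatives $x_K, x_L$, the product $K^+ L^+ = \sum_{M} a_{KL}^M\, M^+$ where $a_{KL}^M$ counts pairs $(g,h)$ with $g \in K$, $h \in L$, $gh = x_M$, divided by $|M|$; equivalently $a_{KL}^M = |\{g \in K : x_M g^{-1} \in L\}|$. Since we are in characteristic $2$, only the parities of these counts matter. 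Concretely, for $K = L = A_{3,i}$, the representative is the diagonal class $\mathrm{diag}(1,\tau^i)$, and the product of two elements in this class lands in a union of classes; the ``diagonal part'' of the product contributes $A_{3,2i}^+$ (coming from the pair $(\mathrm{diag}(1,\tau^i), \mathrm{diag}(1,\tau^i))$ itself and its conjugates), while the off-diagonal contributions come in pairs that cancel mod $2$. The special behaviour at $i = u\frac{q-1}{4}$ reflects that $A_{3,i}$ then has order dividing $4$ in an appropriate sense: $\tau^{2i} = \tau^{u(q-1)/2} = (\pm 1)^u$, so $A_{3,2i}$ degenerates — it is $A_1$ when $u$ is even (since $\tau^{2i}=1$) and it is the class $A_{3,(q-1)/2}$ when $u$ is odd (since $\tau^{2i}=-1$).

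So the first key step is the case analysis on $\tau^{2i}$: (a) generic $i$ with $\frac{q-1}{4}\nmid i$, giving $A_{3,2i}^+$ directly in $Z(kG)$ with no reduction needed (one checks $A_{3,2i}$ does not appear among the ``$\emptyset$'' preimages and, via Table~\ref{ortho}, its image survives or is identified as stated); (b) $i = u\frac{q-1}{4}$ with $u$ odd, where the naive product gives $A_{3,(q-1)/2}^+$ in $Z(kG)$, and then I invoke the basis relation from Lemma~\ref{lem-dim}(ii) — namely $A_{3,(q-1)/4}^+ \in T_1^\perp$ and more relevantly that $A_{3,(q-1)/2}^+ = (A_{3,(q-1)/4}^+)$-type element, actually one must track that $A_{3,(q-1)/2}^+$ is the image under $K \mapsto (K^{2^{-1}})^+$ of $A_{3,(q-1)/2}$, hence lies in $T_1^\perp(kG)$, so it becomes $0$ in $\overline{Z}$; (c) $i = u\frac{q-1}{4}$ with $u$ even, where $\tau^{2i} = 1$ so the product is $A_1^+$ in $Z(kG)$, which is nonzero in $\overline{Z}$ by Lemma~\ref{lem-dim}(iii).

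The second key step is verifying the mod-$2$ cancellation of the off-diagonal terms in the product $A_{3,i}^+ \cdot A_{3,i}^+$ computed in $GL_2(q)/Z$. Here I would parametrize elements of the class $A_{3,i}$ as $g P g^{-1}$ for $P = \mathrm{diag}(1,\tau^i)$ and $g$ ranging over coset representatives of the centralizer (a torus of order $q-1$), then count how often $(gPg^{-1})(hPh^{-1})$ is conjugate to a fixed element of each target class. The point is that the centralizer structure and the Weyl-group symmetry $P \leftrightarrow P^{-1}\cdot(\text{scalar})$ (which is an identity in $PGL_2$) pairs up solutions, making all counts even except for the distinguished diagonal contribution. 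I expect \emph{this} counting — getting the parities exactly right, uniformly in $i$, and correctly handling the coincidences when the target class itself has larger centralizer (the classes $A_{3,(q-1)/2}$ and $A_{4,(q+1)/2}$, with centralizer order $2(q\mp1)$) — to be the main obstacle; the case analysis and the reduction modulo $T_1^\perp$ are then routine bookkeeping using Tables~\ref{table-conjclass} and~\ref{ortho} and Lemma~\ref{lem-dim}.
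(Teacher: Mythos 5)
Your plan --- compute $(A_{3,i}^+)^2$ exactly in $Z(kG)$ as a linear combination of class sums and then reduce modulo $T_1^\perp(kG)$ --- is a genuinely different route from the paper's, which works directly in $\overline{Z}(kG)$ by appealing to the identity
\[
(A_{3,i}^+)^2 \equiv \sum_{g\in G/C_G(A_{3,i})} g A_{3,i}^2 g^{-1} \pmod{T_1^\perp(kG)}
\]
(the squaring map is compatible with the quotient by $T_1^\perp$; the cross terms are commutators in characteristic $2$), and then only has to compare centralizers of $A_{3,i}$ and $A_{3,2i}$. Your approach is in principle viable but has concrete errors as written.

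The most serious one is in case (b). You assert that $A_{3,(q-1)/2}^+$ ``is the image under $K\mapsto (K^{2^{-1}})^+$ of $A_{3,(q-1)/2}$, hence lies in $T_1^\perp(kG)$, so it becomes $0$ in $\overline{Z}$''. This is false: by Table~\ref{ortho}, $(A_{3,(q-1)/2})^{2^{-1}}=A_{3,(q-1)/4}$, so the corresponding basis element of $T_1^\perp(kG)$ is $A_{3,(q-1)/4}^+$, not $A_{3,(q-1)/2}^+$. Indeed Lemma~\ref{lem-dim}(iii) explicitly lists $A_{3,(q-1)/2}^+$ among the \emph{basis vectors of the quotient} $\overline{Z}(kG)$, so it is definitely nonzero there. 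The actual reason the square vanishes when $u$ is odd is that $|C_G(A_{3,2i})|/|C_G(A_{3,i})|=2$, so the ``diagonal'' sum $\sum_{g\in G/C_G(A_{3,i})} g A_{3,2i} g^{-1}$ equals $2\,A_{3,2i}^+=0$ in characteristic $2$; the vanishing is a $2$-torsion phenomenon in the coefficient, not an identification modulo $T_1^\perp$.

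Two smaller issues. First, your statements that the product equals $A_{3,2i}^+$, respectively $A_1^+$, ``directly in $Z(kG)$ with no reduction needed'' cannot be right: the structure-constant expansion of $(A_{3,i}^+)^2$ in $Z(kG)$ has nonzero contributions on many classes (this is visible in the paper's own exact computation in the proof of Proposition~\ref{squareII}, where $\emph{all}$ $A_{3,u}$ with $u\not\equiv\pm(i\pm j)$ get structure constant $q-1$), and the off-diagonal terms do \emph{not} cancel pairwise in $kG$ since the conjugates $gA_{3,i}g^{-1}$ and $hA_{3,i}h^{-1}$ need not commute; they only cancel modulo commutators, i.e.\ one still needs the nontrivial fact that the resulting central element lies in $T_1^\perp(kG)$. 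Second, in case (c) the coefficient of $A_1^+$ in the diagonal part is $|G/C_G(A_{3,i})|=\tfrac{q(q+1)}{2}$, and you need $q\equiv 1\bmod 8$ to conclude this is odd; your sketch does not record this, although it is where the congruence hypothesis enters.
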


\begin{proof} For all $i\in\mathbb{Z}$ we have 
 \begin{eqnarray*} 
(A_{3, i}^+)^2&=&(\sum_{g\in G/C_G(A_{3,i})}g A_{3, i}g^{-1})^2
 =\sum_{g\in G/C_G(A_{3,i})}g A_{3, i}^2g^{-1}\\
 &=&\sum_{g\in
G/C_G(A_{3,i})}g A_{3, 2i}g^{-1}.\end{eqnarray*}
If $\frac{q-1}{4}\nmid i$, then it is easy to see that
$C_G(A_{3,i})=C_G(A_{3,2i})$, so we have
$$(A_{3, i}^+)^2=\sum_{g\in G/C_G(A_{3,i})}g A_{3, 2i}g^{-1}=
\sum_{g\in G/C_G(A_{3,2i})}g A_{3, 2i}g^{-1}=A_{3,2i}^+.$$
If $i=u \frac{q-1}{4}$  with $u$ odd, then
$|C_G(A_{3,2i})/C_G(A_{3,i})|=2$, and we have
$$(A_{3, i}^+)^2=2\sum_{g\in G/C_G(A_{3,2i})}g A_{3, 2i}g^{-1}=2A_{3,2i}^+=0.$$
If $i=u \frac{q-1}{4}$  with $u$ even, then
$$(A_{3, i}^+)^2=\sum_{g\in G/C_G(A_{3,i})}g A_{3, 2i}g^{-1}=
|G/C_G(A_{3,i})|A_1^+= \frac{q(q+1)}{2}A_1^+ =  A_1^+ ,$$
where for the last equality we use that $q\equiv 1 \mod 8$.
\end{proof}

As a consequence, the next result provides a basis of the radical
of the factor ring $\overline{Z}(kG)$, in terms of the basis
of $\overline{Z}(kG)$ given in Lemma \ref{lem-dim}(iii).
Before stating the result, let us introduce some more notations 
which will also be useful in the sequel.
For $0\leq s\leq \frac{q'-1}{2}$, denote
$$\mathcal{I}_s=\{ i: 1\leq i\leq \frac{q-5}{4}=2^{n-3}q'-1 ~\mbox{and}~
i\equiv \pm s \ \text{mod}\ q'\}.$$
 Let $\mathcal{I}_s^{\text{even}}$ (resp. $\mathcal{I}_s^{\text{odd}}$) 
be the set of even (resp. odd) numbers in
 $\mathcal{I}_s$. Note that $|\mathcal{I}_0|=2^{n-3}-1$ 
and $|\mathcal{I}_s|=2^{n-2}$
for $1\leq s\leq \frac{q'-1}{2}$.

\begin{Prop}\label{radical}   
  A basis of $ J(\overline{Z}(kG))$ is given by
  $$\{A_1^++A_{3,
    \frac{q-1}{2}}^+;
    A_{3,i}^+, i\in \mathcal{I}_0 ; A_{3,i}^++A_{3,s}^+,  i
    \in \mathcal{I}_s, i\neq s, 1\leq s\leq \frac{q'-1}{2}
    \}.$$
    As a consequence, $\dim J(\overline{Z}(kG))=
    \frac{q-1}{4}-\frac{q'-1}{2}$.
\end{Prop}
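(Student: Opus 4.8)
The plan is to determine the Jacobson radical of $\overline{Z}(kG)$ by first understanding the structure of $\overline{Z}(kG)$ as an algebra, using the explicit basis from Lemma~\ref{lem-dim}(iii) together with the multiplication rule for the elements $A_{3,i}^+$ provided by Proposition~\ref{square}. The key observation is that $A_1^+$ is the identity element of $\overline{Z}(kG)$ (being the conjugacy class sum of the identity element), so an element $z\in\overline{Z}(kG)$ lies in the radical precisely when $z$ is nilpotent, or more usefully, when the subalgebra it generates is nilpotent. Since $\overline{Z}(kG)$ is commutative, $J(\overline{Z}(kG))$ is exactly the set of nilpotent elements, and it has codimension equal to the number of irreducible (one-dimensional) representations, i.e.\ the number of blocks of $\overline{Z}(kG)$.

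First I would rewrite the spanning set of Lemma~\ref{lem-dim}(iii) in a form adapted to the partition of indices into the sets $\mathcal{I}_s$. The basis elements of $\overline{Z}(kG)$ are $A_1^+$, $A_{3,\frac{q-1}{2}}^+$, the $A_{3,i}^+$ for $1\le i\le\frac{q-5}{4}$, and the $A_{4,j}^+$ for $1\le j\le\frac{q-1}{4}$; the proposed radical basis replaces $A_1^+$ by $A_1^++A_{3,\frac{q-1}{2}}^+$ (killing the $A_1^+$ direction while keeping one combination), keeps the $A_{3,i}^+$ with $i\in\mathcal{I}_0$ untouched, and for each $s$ with $1\le s\le\frac{q'-1}{2}$ replaces the $A_{3,i}^+$ with $i\in\mathcal{I}_s$, $i\ne s$, by the differences $A_{3,i}^++A_{3,s}^+$. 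Counting: this gives one element from the $A_1$/$A_{3,\frac{q-1}{2}}$ part, plus $|\mathcal{I}_0|=2^{n-3}-1$ from the $s=0$ part, plus $(|\mathcal{I}_s|-1)=2^{n-2}-1$ for each of the $\frac{q'-1}{2}$ values of $s\ge 1$, and nothing involving the $A_{4,j}^+$; summing yields $1+(2^{n-3}-1)+\frac{q'-1}{2}(2^{n-2}-1)$, which I would check equals $\frac{q-1}{4}-\frac{q'-1}{2}$ using $q-1=2^{n-1}q'$.

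The substantive part is to show this set really spans $J(\overline{Z}(kG))$. I would argue in two directions. For the inclusion $\subseteq$: each listed element is nilpotent. Indeed, by Proposition~\ref{square}, squaring $A_{3,i}^+$ doubles the index, and iterating eventually lands on an index of the form $u\frac{q-1}{4}$ with $u$ odd (giving $0$) — here the congruence condition $i\equiv\pm s\bmod q'$ with $1\le i\le \frac{q-5}{4}$ guarantees $\frac{q-1}{4}\nmid i$ so the first doubling stays in range in the expected way; for the elements $A_{3,i}^++A_{3,s}^+$ one uses that squaring sends this to $A_{3,2i}^++A_{3,2s}^+$ (cross terms vanish in characteristic $2$) and then tracks both indices simultaneously — since $2i\equiv 2s\bmod q'$ remains true, after enough steps both indices hit a common multiple of $\frac{q-1}{4}$ and the whole expression becomes $0$ or becomes $A_1^++A_1^+=0$; and for $A_1^++A_{3,\frac{q-1}{2}}^+$ one checks directly that its square is $A_1^++A_1^+=0$ using that $A_{3,\frac{q-1}{2}}^+$ squares to $A_1^+$ (the case $i=\frac{q-1}{2}=2\cdot\frac{q-1}{4}$, $u=2$ even, in Proposition~\ref{square}). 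For the reverse inclusion and to pin down the dimension exactly, I would count the semisimple quotient: the quotient $\overline{Z}(kG)/\langle\text{listed elements}\rangle$ is spanned by the images of $A_1^+$ (equal to $A_{3,\frac{q-1}{2}}^+$, and acting as $1$), the single class $A_{3,s}^+$ for each $s=1,\dots,\frac{q'-1}{2}$ (with $A_{3,s}^+$ idempotent modulo the radical, since $(A_{3,s}^+)^2=A_{3,2s}^+\equiv A_{3,s}^+$ by the congruence), and the $A_{4,j}^+$; one then shows these multiply like a product of copies of $k$, exhibiting $\frac{q+1}{2}-(\frac{q-1}{4}-\frac{q'-1}{2})=\frac{q'+3}{2}+\frac{q-1}{4}$ orthogonal primitive idempotents.

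The main obstacle I anticipate is the bookkeeping around the $A_{4,j}^+$ elements and the relations among the $A_{3,s}^+$ for different $s$: Proposition~\ref{square} only handles squares of $A_{3,i}^+$, so to finish I need the analogous products $A_{3,i}^+A_{3,i'}^+$ and the products involving $A_{4,j}^+$, which are presumably computed in the lemmas preceding this one (alluded to by the reference to Corollary~\ref{radical} and Proposition~\ref{radicalsquare}); I would invoke those structure constants to verify that no $A_{4,j}^+$ contributes to the radical and that the idempotents listed above are genuinely orthogonal and primitive. Assuming those multiplication formulas, the dimension count closes and the stated basis follows; the equality $\dim J(\overline{Z}(kG))=\frac{q-1}{4}-\frac{q'-1}{2}$ is then immediate from the cardinality computation above.
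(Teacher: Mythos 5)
Your plan for showing the listed elements lie in the radical — iterated squaring via Proposition~\ref{square}, doubling the index until it lands on $0$ or $A_1^+$, with cross terms vanishing in characteristic $2$ — matches the paper's argument in spirit, though you gloss over the verification (done carefully in the paper) that all intermediate iterates stay in the first case of Proposition~\ref{square}, i.e.\ that $\frac{q-1}{4}\nmid 2^ki$ along the way, which holds because $2^ki\not\equiv 0\pmod{q'}$ when $i\equiv\pm s\pmod{q'}$ with $1\le s\le\frac{q'-1}{2}$. The count of listed elements, $1+(2^{n-3}-1)+\frac{q'-1}{2}(2^{n-2}-1)=\frac{q-1}{4}-\frac{q'-1}{2}$, also matches the paper.

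Where your proposal diverges, and where it has a real gap, is the upper bound on $\dim J(\overline{Z}(kG))$. You want to exhibit the semisimple quotient explicitly by building orthogonal idempotents from $A_1^+$, the $A_{3,s}^+$, and the $A_{4,j}^+$, and you acknowledge this needs the structure constants $A_{3,i}^+A_{3,i'}^+$, $A_{4,j}^+A_{4,j'}^+$, and $A_{3,i}^+A_{4,j}^+$. The difficulty is that the paper never computes the products involving $A_{4,j}^+$ — Proposition~\ref{squareII} treats only $A_3\cdot A_3$ — precisely because its argument avoids needing them. The paper instead invokes the block decomposition of $kG$ recorded in Table~\ref{block}: the block idempotents of $kG$ remain a family of orthogonal idempotents in $\overline{Z}(kG)$, so $\dim\overline{Z}(kG)/J(\overline{Z}(kG))$ is at least the number of blocks, namely $1+\frac{q'-1}{2}+\frac{q-1}{4}$, which immediately gives the matching upper bound $\dim J\le\frac{q+1}{2}-(1+\frac{q'-1}{2}+\frac{q-1}{4})=\frac{q-1}{4}-\frac{q'-1}{2}$. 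This is much cheaper than your direct construction and uses only information already available. Your route is salvageable in principle, but would require substantial extra computation not present in the paper.

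Separately, there is an arithmetic slip in your idempotent count: $\frac{q+1}{2}-\bigl(\frac{q-1}{4}-\frac{q'-1}{2}\bigr)=\frac{q+3}{4}+\frac{q'-1}{2}=1+\frac{q'-1}{2}+\frac{q-1}{4}$, not $\frac{q'+3}{2}+\frac{q-1}{4}$; the latter is off by one (for $q=9$, $q'=1$ the correct value is $3$, your formula gives $4$). This would have made your putative list of orthogonal idempotents one too long.
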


\begin{proof} We will first prove that these elements are nilpotent,
thus are contained in the radical. In fact, by Proposition \ref{square}
(and because we are working in characteristic 2) we have
$$(A_1^++A_{3, \frac{q-1}{2}}^+)^2=(A_1^+)^2+(A_{3,
\frac{q-1}{2}}^+)^2=A_1^++A_1^+=0.$$ 
Next, let $i\in \mathcal{I}_0$, i.e. $i=uq'$ with 
$1\leq u\leq 2^{n-3}-1$;
write $u=2^tu'$ with $u'$ odd. 
Then by Proposition \ref{square} we have 
$$( A_{3, uq'}^+)^{2^{n-2-t}}=( A_{3, 2^{n-3-t}q'2^tu'}^+)^{2}=
( A_{3, 2^{n-3}q'u'}^+)^{2}=( A_{3, u'\frac{q-1}{4}}^+)^2=0, $$  
where the first equality 
is obtained by iteration of Proposition~\ref{square} since 
$\frac{q-1}{4}=2^{n-3}q'$ doesn't divide 
$2^{n-4-t} q'2^tu'=2^{n-4}q'u'$.

Finally, let $i\in \mathcal{I}_s$ for $1\leq s\leq \frac{q'-1}{2}$,
and write $i=uq'\pm s$. Note that here $q'>1$ (otherwise such $s$ don't
exist).
Then, again by Proposition \ref{square}
\begin{eqnarray*}(A_{3,i}^++A_{3, s}^+)^{2^{n-1}}&=
     &(A_{3,i}^+)^{2^{n-1}}+(A_{3,s}^+)^{2^{n-1}}=
     A_{3,2^{n-1}q'u\pm 2^{n-1}s }^++ A_{3,2^{n-1}s}^+\\
     &=& A_{3,(q-1)u\pm 2^{n-1}s }^++ A_{3,2^{n-1}s}^+=
A_{3,2^{n-1}s}^++ A_{3,2^{n-1}s}^+\\
&=
     &0.\end{eqnarray*}
\marginpar{!!!}
For the second equality above note that we are indeed always in
the first case of Proposition \ref{square}, since
$2^{n-2}i\equiv \pm 2^{n-2}s \mod q'=\frac{q-1}{2^{n-1}}$,
and the representative satisfies $|\pm 2^{n-2}s|\le 2^{n-3}(q'-1)$
which can not become $0$ modulo $\frac{q-1}{4}=2^{n-3}q'$, 
since $q'>1$. For the fourth equality recall the definition of 
$A_{3,j}$: indices can be taken modulo $q-1$, and 
$A_{3,j}^+=A_{3,-j}^+$ since the matrices $A_{3,j}$ and
$A_{3,-j}$ are conjugate in $PGL_2(q)$. 

The elements listed in the statement of the proposition
are thus all in the radical and they are evidently
linearly independent. In total we have
     $$1+(2^{n-3}-1)+\frac{q'-1}{2}\times
     (2^{n-2}-1)=\frac{q-1}{4}-\frac{q'-1}{2}$$ elements. 
But on the other hand, 
from the block structure, the radical has at
     most the dimension
$$\frac{q+1}{2}-1-\frac{q'-1}{2}-\frac{q-1}{4}=\frac{q-1}{4}-\frac{q'-1}{2}$$ 
which is 
the dimension of $\overline{Z}(kG)$ minus the number of blocks
(cf. Table \ref{block}).
So the result follows.
\end{proof}

\begin{Rem}\label{autre} It is easy to see that for 
$1\leq s\leq \frac{q'-1}{2}$,  
there is only one number in
$\mathcal{I}_s$ which is divisible by $2^{n-2}$, denoted by
$\varphi(s)$ (not to be confused with Euler's totient function). 
We can replace the element $A_{3,i}^++A_{3,s}^+, i
    \in \mathcal{I}_s, i\neq s$ by
$A_{3,i}^++A_{3, \varphi(s) }^+, i
    \in \mathcal{I}_s, i\neq \varphi(s)$ in the basis. We will use this
point in the proof of Proposition~\ref{radicalsquare}. Furthermore,
if $i, j\in \mathcal{I}_s$ for $1\leq s\leq \frac{q'-1}{2}$, then
$$A_{3,i}^++A_{3, j  }^+=A_{3,i}^++A_{3, s}^++A_{3,j}^++A_{3, s}^+\in
J(\overline{Z}(kG)).$$
\end{Rem}

We now turn to studying 
the square of the radical. To this end we first need some preparations.

\begin{Lem}  \label{repre} For $1\leq i\leq \frac{q-3}{2}$ we have
%\begin{eqnarray*}A_{3, i}^+&=&\sum_{\alpha, \beta \in \mathbb{F}_q}
%\left(
%\begin{array} {r@{ \quad }l}
%1+\alpha\beta-\alpha\beta \tau^i&      -\beta+\beta \tau^i\\
% \alpha(1+\alpha\beta)-\alpha(1+\alpha\beta) \tau^i&      
%-\alpha\beta+(1+\alpha\beta) \tau^i
%\end{array}\right) \\
%& &+ \sum_{  \gamma \in \mathbb{F}_q} \left( \begin{array} {r@{ 
%\quad }l}
%  \tau^i&     0\\
% -\gamma+ \gamma \tau^i&      1
%\end{array}\right)\end{eqnarray*}
$$A_{3, i}^+=\sum_{\alpha, \beta \in \mathbb{F}_q}
\left(
\begin{array} {r@{   \quad }l}
1+\alpha\beta-\alpha\beta \tau^i &      -\beta+\beta\tau^i \\
 \alpha(1+\alpha\beta)(1-\tau^i)  &      
-\alpha\beta+(1+\alpha\beta)\tau^i
\end{array} \right)
+ \sum_{  \gamma \in \mathbb{F}_q} \left( \begin{array} 
{r@{  \quad }l}
  \tau^i&     0\\
 \gamma(\tau^i-1) &      1
\end{array}\right)
$$
\end{Lem}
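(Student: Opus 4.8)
The statement is really a claim about $PGL_2(q)$ as an abstract group: that the conjugacy class of $A_{3,i}$, viewed as a \emph{set} of matrices, is exactly the set of $q(q+1)$ matrices appearing on the right-hand side; summing these in $kG$ then gives the asserted identity. So the plan is to produce an explicit transversal of $C_G(A_{3,i})$ in $G=PGL_2(q)$. First I would record the centralizer: the hypothesis $1\le i\le\frac{q-3}{2}$ forces $\tau^i\notin\{1,-1\}$, so the eigenvalues $1,\tau^i$ of $A_{3,i}$ are distinct and not negatives of one another; hence $C_G(A_{3,i})$ is exactly the image $T$ of the diagonal torus, of order $q-1$, and the class has $|G:T|=q(q+1)=q^2+q$ elements --- already matching the number of summands on the right ($q^2$ in the first sum, $q$ in the second).

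Next I would compute the conjugation formula: for $g=\left[\begin{array}{cc}a&b\\c&d\end{array}\right]$ a direct matrix multiplication gives, up to the scalar $\det(g)^{-1}$ (irrelevant in $PGL_2(q)$),
$$g\,A_{3,i}\,g^{-1}=\left[\begin{array}{cc} ad-bc\tau^i & ab(\tau^i-1)\\ cd(1-\tau^i) & ad\tau^i-bc\end{array}\right].$$
Then I would substitute the proposed transversal, namely the $q^2+q$ matrices
$$g_{\alpha,\beta}=\left[\begin{array}{cc}1&\beta\\ \alpha&1+\alpha\beta\end{array}\right]\ (\alpha,\beta\in\mathbb{F}_q),\qquad g_{\gamma}=\left[\begin{array}{cc}0&1\\1&\gamma\end{array}\right]\ (\gamma\in\mathbb{F}_q).$$
Each $g_{\alpha,\beta}$ has determinant $1$, and plugging it into the formula reproduces the summand of the first sum verbatim; plugging in $g_\gamma$ (and clearing the factor $\det(g_\gamma)^{-1}=-1$) reproduces the summand of the second sum. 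This is pure bookkeeping.

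The one real point is that these $q(q+1)$ conjugates of $A_{3,i}$ are pairwise distinct as elements of $PGL_2(q)$; since the class has exactly $q(q+1)$ elements, they then exhaust it and the identity follows. Here I would use that the element $A_{3,i}$ of $PGL_2(q)$ has two distinguishable eigenlines --- distinguishable because the two eigenvalue ratios $\tau^i$ and $\tau^{-i}$ are unequal --- so that each conjugate $g\,A_{3,i}\,g^{-1}$ canonically determines the \emph{ordered} pair of lines $([ge_1],[ge_2])$ in $\mathbb{P}^1(\mathbb{F}_q)$. For the first family this ordered pair equals $([1:\alpha],\,[\beta:1+\alpha\beta])$, from which one recovers $\alpha$ and then $\beta$; for the second it equals $([0:1],\,[1:\gamma])$, which recovers $\gamma$; and no pair from the first family can coincide with one from the second, since its first entry $[1:\alpha]$ is never $[0:1]$. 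Hence all $q(q+1)$ listed matrices are distinct, so they constitute precisely the conjugacy class of $A_{3,i}$, and the claimed formula drops out on summing in $kG$. I expect the transversal verification just sketched to be the main (indeed the only) obstacle; it also shows in passing that no coefficient collapses to $2=0$ when we pass to characteristic $2$.
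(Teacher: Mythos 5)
Your proposal is correct and uses the same transversal as the paper: the $q^2$ matrices $g_{\alpha,\beta}=\left(\begin{smallmatrix}1&\beta\\ \alpha&1+\alpha\beta\end{smallmatrix}\right)$ are exactly the products $\left(\begin{smallmatrix}1&0\\ \alpha&1\end{smallmatrix}\right)\left(\begin{smallmatrix}1&\beta\\ 0&1\end{smallmatrix}\right)$ the paper writes down, and $g_\gamma=\left(\begin{smallmatrix}0&1\\ 1&\gamma\end{smallmatrix}\right)$ is their $\left(\begin{smallmatrix}0&1\\ 1&0\end{smallmatrix}\right)\left(\begin{smallmatrix}1&\gamma\\ 0&1\end{smallmatrix}\right)$. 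The only divergence is in how you justify that these $q(q+1)$ elements form a set of coset representatives for $G/C_G(A_{3,i})$: the paper derives this structurally, by first noting $C_G(A_{3,i})=T/\mathbb{F}_q^*$ and then choosing transversals of $GL_2(q)/B$ and $B/T$ separately (essentially the Bruhat decomposition), while you verify it a posteriori by showing the $q(q+1)$ conjugates are pairwise distinct via their ordered pairs of eigenlines in $\mathbb{P}^1(\mathbb{F}_q)$, which works because $\tau^i\neq\tau^{-i}$ for $1\le i\le\frac{q-3}{2}$. Both arguments are complete and of comparable length; yours is slightly more self-contained since it avoids invoking the Borel subgroup explicitly, whereas the paper's route makes the count $q(q+1)=(q+1)\cdot q=|GL_2/B|\cdot|B/T|$ transparent without a separate distinctness check.
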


\begin{proof} As usual, let $G:=PGL_2(q)=GL_2(q)/\mathbb{F}_q^*$. 
Consider the following subgroups of $GL_2(q)$
$$T=\bigg\{\left( \begin{array} {r@{ ~  \quad }l}
                   a  &        0 \\
                  0 &  b
\end{array}\right), a, b \in \mathbb{F}_q^*\bigg\} \mbox{~~~~~and~~~~~}
B=\bigg\{\left( \begin{array} {r@{ ~  \quad }l}
                   a  &       c \\
                  0 &  b
\end{array}\right), a, b  \in \mathbb{F}_q^*, c\in \mathbb{F}_q\bigg\}.$$
Then for $1\leq i\leq \frac{q-3}{2}$ we have $C_G(A_{3,
i})=T/\mathbb{F}_q^*$, hence
$G/C_G(A_{3,i})\cong GL_2(q)/T.$ 
%=(GL_2(q)/C_G(A_{3, i}))/(T/C_G(A_{3, i})=GL_2(q)/T.$$ A set of
Representatives of $GL_2(q)/B$ can be chosen as
$$\bigg\{\left( \begin{array} {r@{ ~  \quad }l}
1&     0\\
 \alpha&     1
\end{array}\right), \alpha\in \mathbb{F}_q; \left( \begin{array} {r@{ ~  \quad }l}
0&     1\\
 1&      0
\end{array}\right)\bigg\}$$
and a set of representatives of $B/T$ can be chosen as
$$\bigg\{\left( \begin{array} {r@{ ~  \quad }l}
1&    \beta\\
 0&     1
\end{array}\right), \beta\in \mathbb{F}_q \bigg\}. $$ 
So a set of representatives of $G/C_G(A_{3,i})$ can be chosen as
$$\bigg\{\left( \begin{array} {r@{ ~  \quad }l}
1&     0\\
 \alpha&     1
\end{array}\right)\left( \begin{array} {r@{ ~  \quad }l}
1&    \beta\\
 0&     1
\end{array}\right),  \left( \begin{array} {r@{ ~  \quad }l}
0&    1\\
 1&   0
\end{array}\right) \left( \begin{array} {r@{ ~  \quad }l}
1&    \gamma\\
 0&     1
\end{array}\right), \alpha, \beta, \gamma\in \mathbb{F}_q \bigg\}. $$ 
The result then
follows easily from direct calculations.
\end{proof}

\begin{Prop} \label{squareII}
Let $i, j \in \mathbb{Z}$ such that 
$\frac{q-1}{4} \nmid
i, j, i\pm j$. Then in $\overline{Z}(kG)$ we have
$$   A_{3, i}^+ A_{3, j}^+=A_{3, i+j}^++A_{3, i-j}^+.$$
\end{Prop}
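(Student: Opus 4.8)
The natural strategy is the same one that worked for the squares in Proposition~\ref{square}: reduce the product of class sums to a sum over cosets of a suitable centralizer, and then recognize the resulting orbit sums as class sums again. Concretely, writing $A_{3,j}^+=\sum_{g\in G/C_G(A_{3,j})}g A_{3,j}g^{-1}$, I would first compute
$$A_{3,i}^+ A_{3,j}^+=\sum_{g\in G/C_G(A_{3,j})} A_{3,i}^+\,(g A_{3,j}g^{-1}),$$
using that $A_{3,i}^+$ is central to pull it inside. So the whole computation comes down to understanding $A_{3,i}^+\cdot h$ for a single semisimple element $h$ with the same two eigenvalues (up to scaling) as $A_{3,j}$. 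Here the explicit description of $A_{3,i}^+$ as a sum of $q^2+q$ matrices from Lemma~\ref{repre} is exactly what is needed: multiplying that explicit sum by a diagonal (or anti-diagonal) representative of the class of $A_{3,j}$ and collecting terms, one should see the product decompose into $G$-orbits.

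The key step is the orbit bookkeeping. Since both $A_{3,i}$ and $A_{3,j}$ are regular semisimple with centralizer a maximal torus $T/\mathbb{F}_q^*$, their product $A_{3,i}A_{3,j}$ (for diagonal representatives $\mathrm{diag}(1,\tau^i)$, $\mathrm{diag}(1,\tau^j)$) has eigenvalues $\{1,\tau^{i+j}\}$, i.e.\ lies in the class $A_{3,i+j}$; but the elements $g A_{3,j}g^{-1}$ ranging over the coset space are \emph{not} all diagonal, and conjugating $A_{3,i}$ past them produces the second class $A_{3,i-j}$ as well. The cleanest way to see this is a counting argument: $A_{3,i}^+A_{3,j}^+$ is a sum of $|G/C_G(A_{3,i})|\cdot|G/C_G(A_{3,j})|$ group elements (with multiplicity), it is central, hence a $\mathbb{Z}$-combination of class sums $A_K^+$; by comparing the eigenvalue multiset of a typical product $gh$ one sees only the classes $A_{3,i+j}$ and $A_{3,i-j}$ can occur, and then a degree/cardinality count (using $|C_G(A_{3,i\pm j})|=q-1$ under the hypothesis $\frac{q-1}{4}\nmid i\pm j$, which guarantees these are the \textquotedblleft generic\textquotedblright\ regular semisimple classes and in particular $A_{3,i+j}\neq A_{3,i-j}$) pins the coefficients to be $1$ modulo $2$. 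Working in $\overline{Z}(kG)$ over characteristic $2$ is what makes the coefficients collapse to $0$ or $1$, just as in Proposition~\ref{square}.

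The main obstacle I anticipate is the degeneracy analysis: one must check that none of the products $gh$ lands in a \emph{non}-regular semisimple class (the classes $A_1$, $A_2$, $A_{3,\frac{q-1}{2}}$, or the non-split classes $A_{4,\ast}$), and that the coefficients of $A_{3,i+j}^+$ and $A_{3,i-j}^+$ are each exactly $1$ rather than an even number $\geq 2$. This is precisely where the three non-divisibility hypotheses $\frac{q-1}{4}\nmid i,j,i\pm j$ enter: they ensure $A_{3,i}$, $A_{3,j}$ and $A_{3,i\pm j}$ all have centralizer of order exactly $q-1$, so no centralizer jumps occur (contrast the factor-$2$ phenomena in Proposition~\ref{square} when $i=u\frac{q-1}{4}$). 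Alternatively — and this may be the slicker route to actually write down — one can bypass orbit counting entirely by substituting the explicit formula of Lemma~\ref{repre} for $A_{3,i}^+$, multiplying by the explicit representatives $\left(\begin{smallmatrix}1&0\\ \alpha&1\end{smallmatrix}\right)\left(\begin{smallmatrix}1&\beta\\ 0&1\end{smallmatrix}\right)$ and $\left(\begin{smallmatrix}0&1\\ 1&0\end{smallmatrix}\right)\left(\begin{smallmatrix}1&\gamma\\ 0&1\end{smallmatrix}\right)$ of $G/C_G(A_{3,j})$, and directly identifying the characteristic polynomials of the resulting matrices; the diagonal-representative block contributes $A_{3,i+j}^+$ and the anti-diagonal block contributes $A_{3,i-j}^+$, with everything else cancelling in pairs mod $2$. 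Either way the identity $A_{3,i}^+A_{3,j}^+=A_{3,i+j}^++A_{3,i-j}^+$ drops out.
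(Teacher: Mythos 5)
Your overall strategy — expand one factor via Lemma~\ref{repre}, multiply, sort products into conjugacy classes by determinant and trace, and count mod~$2$ — is the right one and is essentially what the paper does. But the central assertion on which you hang the counting argument is false, and this is where the real work of the proposition actually lies.

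You claim that ``by comparing the eigenvalue multiset of a typical product $gh$ one sees only the classes $A_{3,i+j}$ and $A_{3,i-j}$ can occur,'' and later you pose as the main obstacle the need ``to check that none of the products $gh$ lands in a non-regular semisimple class.'' Neither is correct. Eigenvalues of a product of diagonalizable matrices are not controlled by the eigenvalues of the factors (only the determinant is multiplicative), and products $gA_{3,i}g^{-1}\cdot hA_{3,j}h^{-1}$ really do land in other classes. For a concrete instance take $q=9$, $i=1$, $j=2$, $\mathbb{F}_9=\mathbb{F}_3[x]/(x^2+1)$, $\tau=1+x$, and $g=\left(\begin{smallmatrix}1&1\\1&-1\end{smallmatrix}\right)$: then $A_{3,1}\cdot gA_{3,2}g^{-1}$ has trace $x$ and determinant $\tau^3=1-x$, whose characteristic polynomial has discriminant $\tau$, a non-square in $\mathbb{F}_9^*$ — so this product lies in one of the non-split classes $A_{4,\ast}$, not in $A_{3,\pm1}$ or $A_{3,3}$. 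What is actually true, and what the proposition needs, is that \emph{every other class appears with even multiplicity}. That is a much finer statement than ``nothing else occurs,'' and it is precisely where the explicit determinant/trace analysis of Lemma~\ref{repre} earns its keep: over the $B$-block one solves $\lambda^2\tau^u=\tau^{i+j}$, $\lambda(1+\tau^u)=1+\tau^{i+j}+\alpha\beta(1-\tau^i)(1-\tau^j)$ and counts $2q-1$ pairs when $\alpha\beta=0$ and $q-1$ pairs otherwise; over the $C$-block one gets $q$ contributions to $A_{3,i-j}$; the total landing in $A_{3,i\pm j}$ is $3q-2$ (odd) while every other split class picks up $0$, $q-1$, or $2(q-1)$ (even). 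The non-divisibility hypotheses on $i,j,i\pm j$ are used exactly where you say (ruling out centralizer jumps, i.e.\ the collisions $u=(q-1)/2$), so that part of your analysis is fine.

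Relatedly, the cardinality count you invoke cannot ``pin the coefficients to be $1$.'' Even if only $A_{3,i+j}$ and $A_{3,i-j}$ occurred, the total number of products is $|G/C_G(A_{3,i})|\cdot|G/C_G(A_{3,j})|=q^2(q+1)^2$, so the two integer coefficients would sum to $q(q+1)$, not $2$; nothing in a pure size count forces them to be odd. One genuinely has to compute. Your second, ``slicker'' route (substituting Lemma~\ref{repre} and reading off characteristic polynomials) is the one that works and is the paper's route, but it is not the case that ``the diagonal-representative block contributes $A_{3,i+j}^+$ and the anti-diagonal block contributes $A_{3,i-j}^+$'': each block scatters across many classes, and only after the parity bookkeeping do $A_{3,i\pm j}$ emerge with coefficient $1$ in characteristic $2$. (The paper also silently leaves the classes $A_1$, $A_2$, $A_{3,(q-1)/2}$ and $A_{4,\ast}$ to the reader; a complete write-up would need to check those have even coefficient too.)
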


\begin{proof} First consider the product in the center $Z(kG)$; then 
$A_{3, i}^+ A_{3,
j}^+=\sum_{K}a_KK^+$, where $K$ runs through the set of conjugacy
classes. Then 
\begin{eqnarray*} A_{3, i}^+A_{3, j}^+&=&\sum_{g, h\in
G/C_G(A_{3,i})=G/C_G(A_{3,j})}g A_{3, i}g^{-1}h A_{3,
j}h^{-1}\\
&=&\sum_{g\in G/C_G(A_{3,i})}g A_{3, i}(\sum_{h\in G/C_G(A_{3,j})}
g^{-1}h A_{3, j}h^{-1}g)g^{-1} \\ &=&\sum_{g\in G/C_G(A_{3,i})}g
A_{3, i} A_{3, j}^+ g^{-1}.\end{eqnarray*} 
Therefore,
$$a_K=|G/C_G(A_{3,i})|\times |\text{elements of $K$ in } A_{3, i}A_{3,
j}^+|/|K|.$$ 

One can use this simple counting principle to
compute $a_K$. We will treat the most difficult case where $K=A_{3,
u}$ for $1\leq u\leq \frac{q-3}{2}$ and leave the 
other cases to the reader. 
The above formula for the coefficient $a_K$ now reads for $K=A_{3,u}$
as
$$a_K =  q(q+1)\times |\text{elements of $K$ in } A_{3, i}A_{3,
j}^+|/q(q+1) = |\text{elements of $K$ in } A_{3, i}A_{3,
j}^+|.$$ 

Now by Lemma~\ref{repre}
\begin{eqnarray*}A_{3, i}  A_{3, j}^+&=&
\sum_{\alpha, \beta\in \mathbb{F}_q} 
\left( \begin{array} {r@{ \quad }l}
1 &   0  \\0&        \tau^i
\end{array}\right)
\left( \begin{array} {r@{   \quad }l}
1+\alpha\beta-\alpha\beta \tau^j&      -\beta+\beta \tau^j\\
 \alpha(1+\alpha\beta)(1- \tau^j)&      -\alpha\beta+(1+\alpha\beta)
 \tau^j
\end{array}\right)\\ 
&& + \sum_{\gamma\in \mathbb{F}_q}  
\left( \begin{array} {r@{   \quad }l} 
1 &   0  \\ 0&        \tau^i
\end{array}\right)
\left( \begin{array} {r@{   \quad }l}
  \tau^j&     0\\
 -\gamma+ \gamma \tau^j&      1
\end{array}\right)\\
&=&\sum_{\alpha, \beta\in \mathbb{F}_q}  
\left( \begin{array} {r@{ \quad }l}
1+\alpha\beta-\alpha\beta \tau^j&      -\beta+\beta \tau^j\\
 \alpha(1+\alpha\beta)\tau^i(1-\tau^j)&      -\alpha\beta\tau^i+(1+\alpha\beta)
 \tau^{i+j}
\end{array}\right) \\& &+ \sum_{\gamma\in \mathbb{F}_q}
\left( \begin{array} {r@{   \quad }l}
  \tau^j&     0\\
 \tau^i\gamma(\tau^j-1)&       \tau^i
\end{array}\right).\end{eqnarray*}

Denote by $B$ the first matrix in the preceding formula and by $C$
the second matrix. If $B$ represents the same coset as 
$A_{3, u}$ in $PGL_2(q)$,
then there exists $\lambda \in \mathbb{F}_q^*$ which satisfies, by
considering the determinant and the trace
$$\lambda^2\tau^u=\tau^{i+j} ~~~\mbox{~~and~~}~~~  
\lambda
(1+\tau^u)=1+\tau^{i+j}+\alpha\beta (1-\tau^i)(1-\tau^j).$$ The case
that $i+j-u$ is odd is impossible, as $\lambda\in \mathbb{F}_q^*$ 
and $\tau$ is a generator of $\mathbb{F}_q^*$, i.e. the squares 
in $\mathbb{F}_q^*$ are given by the even powers of $\tau$.
So consider now the case where $i+j-u$ is even, then $\lambda=\pm
\tau^{\frac{i+j-u}{2}}$ and
$$ \pm (1+\tau^u)\tau^{\frac{i+j-u}{2}} =1+\tau^{i+j}+\alpha\beta
(1-\tau^i)(1-\tau^j).$$ 
If $ \pm (1+\tau^u)\tau^{\frac{i+j-u}{2}}
=1+\tau^{i+j}$, i.e. $u=\pm (i+j)\ \text{mod}\ q-1$, then 
$\alpha\beta=0$. We have $2q-1$ possibilities for the pair $(\alpha,
\beta)$, namely $(0,0);(0, \beta), \beta \in \mathbb{F}_q^*;
 (\alpha, 0),
 \alpha \in \mathbb{F}_q^*$.
If $ \pm (1+\tau^u)\tau^{\frac{i+j-u}{2}} \neq
1+\tau^{i+j}$,  then $$\alpha\beta=\frac{\pm
(1+\tau^u)\tau^{\frac{i+j-u}{2}} -
1-\tau^{i+j}}{(1-\tau^i)(1-\tau^j)}\neq 0$$ and we have $q-1$
possibilities for the pair $(\alpha, \beta)$. 

It is not difficult to see
that $C$ is similar to $A_{3, i-j}$ in $PGL_2(q)$ and so there are
$q$ possibilities for $\gamma$. The counting principle gives $a_K=1$
for $K=A_{3, i\pm j}^+$ and $a_K=0$ for $K=A_{3, u}$ with 
$\pm u\neq   i\pm j \ \text{mod}\ q-1$.
\end{proof}

The following crucial result determines a basis of the
square of the radical, in terms of the basis of the radical
obtained in Proposition \ref{radical}.

\begin{Prop}\label{radicalsquare}
  A basis of $ J^2(\overline{Z}(kG))$ is given by the union
  $\mathcal{B}_0 \bigcup  (\bigcup_{s=1}^{\frac{q'-1}{2}}
  \mathcal{B}_s)$ where
    $$\mathcal{B}_0=\{ A_{3, i}^+, i \in \mathcal{I}_0^{\text{even}};
    A_{3,q'}^++A_{3,i}^+, i \in \mathcal{I}_0^{\text{odd}}-\{q'\} \}$$
and for $1\leq s\leq \frac{q'-1}{2}$, if $s$ is odd,
$$\mathcal{B}_s=
\{A_{3, s}^++A_{3, i}^+, i \in \mathcal{I}_s^{odd}-\{ s\}, A_{3,
q'+s}^++A_{3, i}^+, i \in \mathcal{I}_s^{even}-\{ q'+s\}\}$$ and if
$s$ is even, $$\mathcal{B}_s= \{A_{3, s}^++A_{3, i}^+, i \in
\mathcal{I}_s^{even}-\{ s\}; A_{3, q'+s}^++A_{3, i}^+, i \in
\mathcal{I}_s^{odd} -\{ q'+s\}\}.$$
 As a consequence, $\dim
J^2(\overline{Z}(kG))= \frac{q-1}{4}-(q'+1)$

\end{Prop}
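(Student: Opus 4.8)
The plan is to compute $J^2(\overline{Z}(kG))$ by multiplying together the basis elements of $J(\overline{Z}(kG))$ obtained in Proposition~\ref{radical}, using the multiplication rules from Proposition~\ref{square} and Proposition~\ref{squareII}. First I would observe that $J^2$ is spanned by all pairwise products of the generators of $J$, so it suffices to work out products of elements of the forms $A_1^++A_{3,\frac{q-1}{2}}^+$, $A_{3,i}^+$ for $i\in\mathcal{I}_0$, and $A_{3,i}^++A_{3,s}^+$ for $i\in\mathcal{I}_s$. Using that we are in characteristic $2$ and the relation $A_{3,j}^+=A_{3,-j}^+=A_{3,q-1-j}^+$, together with Proposition~\ref{squareII}'s identity $A_{3,i}^+A_{3,j}^+=A_{3,i+j}^++A_{3,i-j}^+$ (valid when $\frac{q-1}{4}$ divides none of $i,j,i\pm j$), and the special cases from Proposition~\ref{square} when $\frac{q-1}{4}\mid i$, these products collapse to sums of at most two conjugacy class sums $A_{3,\bullet}^+$, with indices that stay inside the relevant $\mathcal{I}_s$ (since the congruence class mod $q'$ is either preserved or sent to $\pm(s+s')$). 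This is the bookkeeping that produces the candidate sets $\mathcal{B}_s$; the splitting by parity of $s$ and the choice of distinguished representatives $q'$, $q'+s$, $\varphi(s)$ is exactly to arrange that the generators of each $\mathcal{B}_s$ lie in the span of products, which is where Remark~\ref{autre} gets used to rewrite $A_{3,i}^++A_{3,s}^+$ in terms of $A_{3,i}^++A_{3,\varphi(s)}^+$.

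Next I would verify the two containments separately. For $J^2(\overline{Z}(kG))\supseteq \langle\mathcal{B}_0\cup\bigcup_s\mathcal{B}_s\rangle$, I would exhibit each listed basis element explicitly as a product (or short sum of products) of two radical generators: for instance, inside $\mathcal{I}_0$, multiplying $A_{3,q'}^+$ by $A_{3,i}^+$ with $i\in\mathcal{I}_0$ and comparing with Proposition~\ref{square} shows $A_{3,2q'}^+$, $A_{3,q'}^++A_{3,i}^+$ etc.\ are products; the parity of the index toggles under doubling, which explains why $\mathcal{I}_0^{\text{even}}$ elements appear alone (they are squares $A_{3,i/2}^+$ up to the identity term) while $\mathcal{I}_0^{\text{odd}}$ elements only appear paired with $A_{3,q'}^+$. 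For $J^2(\overline{Z}(kG))\subseteq \langle\mathcal{B}\rangle$, the cleanest route is dimension counting: count $|\mathcal{B}_0|$ and each $|\mathcal{B}_s|$ using $|\mathcal{I}_0|=2^{n-3}-1$ and $|\mathcal{I}_s|=2^{n-2}$, noting the even/odd split of each $\mathcal{I}_s$ has sizes differing by at most one, so that after removing one distinguished element from each half one gets $|\mathcal{B}_0|=2^{n-3}-2$ and $|\mathcal{B}_s|=2^{n-2}-2$; summing gives
$$(2^{n-3}-2)+\frac{q'-1}{2}\,(2^{n-2}-2)=\frac{q-1}{4}-(q'+1),$$
and then checking linear independence of $\mathcal{B}$ (immediate, since the elements involve disjoint or triangularly-ordered sets of basis vectors $A_{3,\bullet}^+$) forces equality once one also checks that $J^2$ contains nothing outside this span — which follows because every product of two generators has been shown to lie in $\langle\mathcal{B}\rangle$ by the multiplication rules.

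The main obstacle will be the case analysis in the containment $J^2\supseteq\langle\mathcal{B}\rangle$: one must carefully track, for products $(A_{3,i}^++A_{3,s}^+)(A_{3,i'}^++A_{3,s'}^+)$, which of the four resulting terms $A_{3,i\pm i'}^+$, $A_{3,s\pm i'}^+$, $A_{3,i\pm s'}^+$, $A_{3,s\pm s'}^+$ land back in $\mathcal{I}_s$ versus $\mathcal{I}_{s+s'}$ or $\mathcal{I}_{s-s'}$, and handle the degenerate cases where $\frac{q-1}{4}$ divides one of the indices (invoking Proposition~\ref{square} rather than Proposition~\ref{squareII}), as well as the edge case $q'=1$ (where the sets $\mathcal{I}_s$ for $s\ge 1$ are empty and only $\mathcal{B}_0$ survives). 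The parity toggling under index-doubling is the conceptual key: it is what forces the asymmetric treatment of odd versus even $s$ in the definition of $\mathcal{B}_s$, and getting this exactly right in all residue classes is the delicate part. Once the span is pinned down, identifying $\dim J(\overline{Z})/J^2(\overline{Z})=\frac{q'+3}{2}$ and feeding it through the block decomposition as in Table~\ref{complete} is routine.
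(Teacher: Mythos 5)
Your plan diverges from the paper's proof in one decisive respect, and the divergence is where the real difficulty lies. The paper establishes only the \emph{forward} containment: it shows, by exhibiting each listed element as a product (or short sum of products) of radical generators, that $\mathcal{B}_0\cup\bigcup_s\mathcal{B}_s\subseteq J^2(\overline{Z})$, and that these elements are linearly independent. This gives only the inequality $\dim J^2(\overline{Z}(kG))\geq\frac{q-1}{4}-(q'+1)$, hence $\dim J(\overline{Z}(kG))/J^2(\overline{Z}(kG))\leq\frac{q'+3}{2}$, hence (after subtracting the contribution of the cyclic blocks from Table~\ref{complete}) $\dim J(\overline{Z}(B_0))/J^2(\overline{Z}(B_0))\leq 2$. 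The paper then invokes Theorem~\ref{HZ}: since $B_0$ is derived equivalent to some $D(2\mathcal{A})^s(c)$, this last dimension can only be $2$ or $3$, so it must be $2$; working back up the chain of inequalities forces equality everywhere, which simultaneously proves the dimension formula and that $\mathcal{B}$ is a basis. You do not need, and the paper never proves, that every product of generators lies in $\langle\mathcal{B}\rangle$.

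Your proposal to establish the reverse containment $J^2\subseteq\langle\mathcal{B}\rangle$ directly, by tracking all pairwise products of radical generators, is not what the paper does and would run into concrete obstructions with the tools available. Propositions~\ref{square} and~\ref{squareII} do not cover all the products that arise: Proposition~\ref{squareII} requires $\frac{q-1}{4}\nmid i,j,i\pm j$, and there is no formula given for $A_{3,\frac{q-1}{2}}^+\,A_{3,j}^+$ (needed to control products with the generator $A_1^++A_{3,\frac{q-1}{2}}^+$), nor for $A_{3,i}^+A_{3,j}^+$ when $\frac{q-1}{4}$ divides $i+j$ or $i-j$ but not $i,j$ individually. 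You would have to prove several additional multiplication rules by the counting method of Lemma~\ref{repre}, and then carry out a genuinely large case analysis over pairs $(s,s')$ of residue classes, parities, and degenerate indices --- far heavier than the forward direction. The conceptual idea you are missing is precisely that Theorem~\ref{HZ} supplies a rigidity constraint (``$2$ or $3$'') that converts a one-sided estimate into an exact computation, sparing one the reverse containment entirely. Your remaining ingredients --- the forward-inclusion constructions, the count $|\mathcal{B}|=(2^{n-3}-2)+\frac{q'-1}{2}(2^{n-2}-2)$, the linear independence, and the use of Remark~\ref{autre} --- do line up with the paper, but the argument as you sketched it does not close without either the extra multiplication lemmas or the appeal to Theorem~\ref{HZ}.
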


\begin{proof}
We will write $\overline{Z}=\overline{Z}(kG)$ in the following. 
We will first prove that the elements listed above are indeed in the 
square of the radical.

\textit{Case $\mathcal{B}_0$.} Let $i\in\mathcal{I}^{even}_0$,
i.e. $i=uq'$ for some $1\le u\le 2^{n-3}-1$ with $u$ even.  
Then by Proposition \ref{square}
$$A_{3,i}^+ = A_{3, uq'}^+=(A_{3, \frac{u}{2}q'}^+)^2 \in
J^2(\overline{Z}).$$ 
Now let $i\in\mathcal{I}_0\setminus \{q'\}$, i.e.
$i=uq'$ for some $1< u\leq 2^{n-3}-1$ where $u$ is odd.
Then by Proposition \ref{squareII} we have
$$ A_{3,q'}^++A_{3, i}^+=
A_{3,q'}^++A_{3, uq'}^+=A_{3, \frac{u+1}{2}q'}^+A_{3,
\frac{u-1}{2}q'}^+\in J^2(\overline{Z}).$$

\smallskip

\textit{Case $\mathcal{B}_s$ with $s$ odd.} 
For $i \in \mathcal{I}_s^{even}-\{q'+ s\}$ we claim that
$$A_{3, q'+s}^++A_{3, i}^+=(A_{3, \frac{q'+s}{2}}^++A_{3,
\frac{i}{2}}^+)^2\in J^2(\overline{Z})$$
(where the first equality holds by Proposition \ref{square}).  
In fact, write
$i=(2u+1)q'\pm s$ with $u\in \mathbb{Z}$. 
Then, if $i=(2u+1)q'+ s$, we get 
$$ A_{3, \frac{q'+s}{2}}^++A_{3,
\frac{i}{2}}^+= A_{3, \frac{q'+s}{2}}^++A_{3,
uq'+\frac{q'+s}{2}}^+\in J(\overline{Z})$$ 
by Remark~\ref{autre}, 
and we have 
$$ A_{3,  q'+s }^++A_{3,
i}^+= (A_{3, \frac{q'+s}{2}}^++A_{3,
uq'+\frac{q'+s}{2}}^+)^2\in J^2(\overline{Z});$$
similarly, if $i=(2u+1)q'-s$,
$$ A_{3, \frac{q'+s}{2}}^++A_{3,
\frac{i}{2}}^+=A_{3, -\frac{q'+s}{2}}^++A_{3,
(u+1)q'-\frac{q'+s}{2}}^+ \in J(\overline{Z})$$ and we have 
$$ A_{3,  q'+s}^++A_{3,i}^+=(A_{3, -\frac{q'+s}{2}}^++A_{3,
(u+1)q'-\frac{q'+s}{2}}^+)^2 \in J^2(\overline{Z}).$$
Suppose now that $i \in
\mathcal{I}_s^{odd}-\{s\}$. Then write $i=2uq'\pm s$ for $u\in
\mathbb{Z}$. Obviously $\frac{q-1}{4}$ doesn't divide $u$. If $u$ is even, 
then by Proposition \ref{squareII} and using the notation from 
Remark \ref{autre} we get 
$$J^2(\overline{Z}) \ni
A_{3,uq'}^+(A_{3, uq'\pm s}^++A_{3, \varphi(s)}^+)= A_{3, s}^++A_{3,
2uq'\pm s}^++A_{3, uq'+\varphi(s)}^++A_{3, uq'-\varphi(s)}^+.$$ As
$uq'\pm \varphi(s)$ are even,  $$A_{3, uq'+\varphi(s)}^++A_{3,
uq'-\varphi(s)}^+=(A_{3, \frac{u}{2}q'+\frac{\varphi(s)}{2}}^++A_{3,
\frac{u}{2}q'-\frac{\varphi(s)}{2}}^+)^2\in J^2(\overline{Z})$$ and
therefore $A_{3, s}^++A_{3, 2uq'+s}^+\in J^2(\overline{Z})$. 

If $u$ is odd, then $$J^2(\overline{Z}) \ni
A_{3,uq'}^+(A_{3, uq'\pm s}^++A_{3, s}^+)= A_{3, s}^++A_{3,
2uq'\pm s}^++A_{3, uq'+s}^++A_{3, uq'-s}^+.$$ As
$uq'\pm s$ are even,  $$A_{3, uq'+\varphi(s)}^++A_{3,
uq'-\varphi(s)}^+=(A_{3, \frac{u-1}{2}q'+\frac{q'+s}{2}}^++A_{3,
\frac{u+1}{2}q'-\frac{q'+s}{2}}^+)^2\in J^2(\overline{Z})$$ and
therefore $A_{3, s}^++A_{3, 2uq'+s}^+\in J^2(\overline{Z})$.
%The
%case where $u$ is odd can be proved by replacing $\varphi(s)$ by $s$
%in the preceding argument.

\smallskip

 \textit{Case $\mathcal{B}_s$ with $s$ even.}  For $i \in
\mathcal{I}_s^{even}-\{ s\}$, we have $$A_{3,  s}^++A_{3,
i}^+=(A_{3, \frac{ s}{2}}^++A_{3, \frac{i}{2}}^+)^2\in
J^2(\overline{Z}),$$ since if we write $i=2uq'\pm s$, then
$\frac{i}{2}=uq'\pm \frac{s}{2}\in \mathcal{I}_{\frac{s}{2}}$.

Suppose now that   $i \in \mathcal{I}_s^{odd}-\{q'+s\}$. Then write
$i=uq'\pm s$ for $u\in \mathbb{Z}$ with $u\neq 1$ odd. Then either $4\mid
u-1$ or  $4\mid u+1$. Suppose $4\mid u-1$, the other case being
similar. For $i=uq'+s$ we have (since $u\neq 1$)
$$J^2(\overline{Z}) \ni A_{3,\frac{u-1}{2}q'}^+(A_{3,
  \frac{u+1}{2}q'+ s}^++A_{3, \varphi(s)}^+)$$ $$= A_{3, uq'+
s}^++A_{3, q'+s}^++A_{3, \frac{u-1}{2}q'+\varphi(s)}^++A_{3,
\frac{u-1}{2}q'-\varphi(s)}^+.$$ As $\frac{u-1}{2}q'\pm \varphi(s)$
are even, $A_{3, \frac{u-1}{2}q'+\varphi(s)}^++A_{3,
\frac{u-1}{2}q'-\varphi(s)}^+\in J^2(\overline{Z})$ and therefore
$A_{3,i}^+ + A_{3,q'+s} = A_{3,uq'+s}^++A_{3, q'+s}^+\in J^2(\overline{Z})$. 

For the case $i=uq'-s$, we prove firstly that 
$A_{3, q'+s}^++A_{3,
q'-s}^+\in J^2(\overline{Z})$. In fact,  $$J^2(\overline{Z}) 
\ni A_{3, q'}^+(A_{3,s}^++A_{3, q'+s}^+) = A_{3,  q'+
s}^++A_{3, q'-s}^++A_{3, 2q'+s}^++A_{3, s}^+$$ 
and hence
$$A_{3, 2q'+s}^++A_{3, s}^+=(A_{3, q'+\frac{s}{2}}^++A_{3,
\frac{s}{2}}^+)^2\in J^2(\overline{Z}).$$ 
Now we consider $$J^2(\overline{Z}) \ni
A_{3,\frac{u-1}{2}q'}^+(A_{3,
  -\frac{u+1}{2}q'+ s}^++A_{3, \varphi(s)}^+)$$ $$= A_{3, uq'-
s}^++A_{3, -q'+s}^++A_{3, \frac{u-1}{2}q'+\varphi(s)}^++A_{3,
\frac{u-1}{2}q'-\varphi(s)}^+$$
$$= A_{3, uq'-
s}^++A_{3, q'+s}^++A_{3, q'+s}^++A_{3, -q'+s}^++A_{3, \frac{u-1}{2}q'+\varphi(s)}^++A_{3,
\frac{u-1}{2}q'-\varphi(s)}^+.$$ 
As above, $A_{3,
\frac{u-1}{2}q'+\varphi(s)}^++A_{3, \frac{u-1}{2}q'-\varphi(s)}^+\in
J^2(\overline{Z})$, and then we also get   
$A_{3, uq'-s}^++A_{3, q'+s}^+\in J^2(\overline{Z})$.
\smallskip

Recall that $|\mathcal{I}_0|=2^{n-3}-1$ and $\mathcal{I}_s|=2^{n-2}$
for $s>1$. Then the total number of elements in the square 
of the radical listed in the proposition is
$$((2^{n-3}-1)-1)+\frac{q'-1}{2}\times (2^{n-2}-2)=\frac{q-1}{4}-(q'+1)$$
and moreover, these elements are evidently linearly independent. 
So
$$\dim J^2(\overline{Z}(kG))\geq \frac{q-1}{4}-(q'+1)$$ and
then we deduce from Proposition \ref{radical} that
$$\dim J(\overline{Z}(kG))/J^2(\overline{Z}(kG))\leq
 (\frac{q-1}{4}-\frac{q'-1}{2})-(\frac{q-1}{4}-(q'+1))=\frac{q'+3}{2}.$$
We thus have by the already proven information in
Table~\ref{complete}  
$$\dim
 J(\overline{Z}(B_0))/J^2(\overline{Z}(B_0))\leq
 \frac{q'+3}{2}-1\times \frac{q'-1}{2}=2.$$ 
However, by Theorem~\ref{HZ}, this dimension can only be $2$ or $3$,
thus
 $$\dim
 J(\overline{Z}(B_0))/J^2(\overline{Z}(B_0))=2,$$
 which implies 
 $$\dim J(\overline{Z}(kG))/J^2(\overline{Z}(kG))=\frac{q'+3}{2}$$
 and then also completes the proof of the key Proposition \ref{key}.
\end{proof}

%%%%%%%%%%%%%%%%%%%%%%%%%%%%%%%%%%%%%%%%%%%%%%%%%%%%%%%%%%%%%%%%%%%%%%%%%
%%%%%%%%%%%%%%%%%%%%%%%%%%%%%%%%%%%%%%%%%%%%%%%%%%%%%%%%%%%%%%%%%%%%%%%%%
\section{The  case $q\equiv -1\ \text{mod}\ 8$  }

In this section, we state without proofs the corresponding results for the case $q\equiv -1\ \text{mod}\ 8$.
Now $q+1=2^{n-1}q'$ with $q'$ odd where $n$ is the defect of the principal block.
Table~\ref{conjugacyclassesautre} gives the list of $2$-regular conjugacy classes in this case and Table~\ref{blockstructureautre} describes the block structure.

\begin{table}[h]\caption{$2$-regular conjugacy classes of 
$PGL_2(q)$ with $q=p^n
\equiv -1\ \text{mod}\ 8$} \label{conjugacyclassesautre}

\begin{tabular} {|c|c|} \hline &\\
conjugacy class  $K$   &representative  $x_K$ \\
  \hline \hline &\\      
$\lambda_1=\lambda_2  \in \mathbb{F}_q^*, \text{ semisimple} $
    & $A_1=\left[\begin{array}{cc} 1&0\\0&1\end{array}\right]$ \\

      \hline & \\
        $\lambda_1=\lambda_2 \in \mathbb{F}_q^*$, \text{nonsemisimple} 
&$A_2=\left[\begin{array}{cc} 1&1\\0&1\end{array}\right]$ \\
 \hline & \\    $ \lambda_1\neq \lambda_2 \in \mathbb{F}_q^*$  &
 $A_{3,2i'}=\left[\begin{array}{cc}
       1&0\\0&\tau^{2i'}\end{array}\right]$, $1\leq i'\leq
       \frac{q-3}{4}$
 \\

\hline & \\  $ \lambda_1\neq \lambda_2
\in\mathbb{F}_{q^2}-\mathbb{F}_q$ &
  $A_{4, 2^{n-1}j'}=\left[\begin{array}{cc} 0&
  -\sigma^{2^{n-1}j'(q+1)}\\1&\sigma^{2^{n-1}j'}+\sigma^{2^{n-1}j'q}\end{array}\right]$, $1\leq j'\leq
  \frac{q'-1}{2}$   \\ \hline

\end{tabular}\end{table}

\begin{table}[t]\caption{$2$-blocks for  $PGL_2(q)$ with $q=p^n \equiv  -1\ \text{mod}\
8$} \label{blockstructureautre}

\begin{tabular} {|c|c|c |}
\hline &&\\
 $2$-block & description & ordinary characters\\
  \hline \hline &&\\   principal block
  & dihedral block  &   $1_G, \theta, sgn, \theta\otimes sgn,$     \\
    $B_0 $
  &  of defect  $n$ &   $   \chi_{q't}, 1\leq t\leq 2^{n-2}-1,$  \\
      &&\\
     \hline&&\\
       $B_{3,s}\sim kC_{2} $  & cyclic block    & \\
  $1\leq s\leq \frac{q-3}{4}$  &   of defect $1$   &\raisebox{2.3ex}[0pt]{$\mu_s,
  \mu_{\frac{q-1}{2}-s}$}\\  
\hline   &&\\
          $B_{4,u} \sim kC_{2^{n-1}}$ &
          cyclic block &
    $ \chi_{q't-u}, 1\leq t\leq 2^{n-2} $  \\
 $ 1\leq u \leq \frac{q'-1}{2}$ &
            of defect $n-1$ &
    $   \chi_{q't+u}, 0\leq t\leq 2^{n-2}-1$  
\\
     
     \hline

\end{tabular}

\end{table}

\begin{Prop}  \label{keyautre}
Let $G=PGL_2(q)$ where $q=p^n\equiv -1\mod 8$. 
\begin{itemize}
 \item[(i)] $\dim Z(kG)=q+2$ and a basis of it is $$\{A_1^+; A_2^+; A_{3,i}^+, 
1\leq i\leq \frac{q-3}{2};
  A_{3, \frac{q-1}{2}}^+;  A_{4,j}^+,  1\leq j\leq \frac{q-1}{2}; A_{4,
  \frac{q+1}{2}}^+\}.$$
  \item[(ii)] $\dim T_1^\bot (kG)=\frac{q+3}{2} $ and a basis of it is
$$\{A_1^++A_{3, \frac{q-1}{2}}^++A_{4, \frac{q+1}{2}}^+;
  A_2^+; A_{3,i}^++A_{3,\frac{q-1}{2}-i}^+, 1\leq i\leq \frac{q-3}{4};$$ $$
          A_{4,j}^++A_{4,\frac{q+1}{2}-j}^+, 1\leq j\leq
          \frac{q-3}{4}; A_{4,\frac{q+1}{4}}^+\}.$$
    \item[(iii)] $\dim Z(kG)/T_1^\bot (kG)=\frac{q+1}{2}$ and a basis of 
     it is  $$\{A_1^+;  A_{4,
    \frac{q+1}{2}}^+;
    A_{3,i}^+, 1\leq i\leq \frac{q-3}{4};  A_{4,j}^+, 1\leq j\leq
    \frac{q-3}{4}
    \}.$$
\item[(iv)] $\dim J(\overline{Z}(kG)) =\frac{q+1}{4}-\frac{q'-1}{2}$ and a  basis of it is given by
  $$\{A_1^++A_{4,
    \frac{q+1}{2}}^+;
    A_{4,j}^+, j\in \mathcal{J}_0 ; A_{4,j}^++A_{4,s}^+,  j
    \in \mathcal{J}_s-\{s\}, 1\leq s\leq \frac{q'-1}{2}
    \}$$ where for $0\leq s\leq \frac{q'-1}{2}$,  
 $$\mathcal{J}_s=\{ j: 1\leq j\leq \frac{q-3}{4}=2^{n-3}q'-1, 
j\equiv \pm s \ \text{mod}\ q'\}.$$

\item[(v)] $\dim J^2(\overline{Z}(kG))=\frac{q+1}{4}-(q'+1)$ and a basis of 
     it is given by  the union
  $\mathcal{C}_0 \bigcup  (\bigcup_{s=1}^{\frac{q'-1}{2}}
  \mathcal{C}_s)$ where
    $$\mathcal{C}_0=\{ A_{4, j}^+, j \in \mathcal{J}_0^{\text{even}};
    A_{4,q'}^++A_{4,j}^+, j \in \mathcal{J}_0^{\text{odd}}-\{q'\} \}$$
and for $1\leq s\leq \frac{q'-1}{2}$, if $s$ is odd,
$$\mathcal{C}_s=
\{A_{4, s}^++A_{4, j}^+, j \in \mathcal{J}_s^{odd}-\{ s\}, A_{4,
q'+s}^++A_{4, j}^+, j \in \mathcal{J}_s^{even}-\{ q'+s\}\}$$ and if
$s$ is even, $$\mathcal{C}_s= \{A_{4, s}^++A_{4, j}^+, j \in
\mathcal{J}_s^{even}-\{ s\}; A_{4, q'+s}^++A_{4, j}^+, j \in
\mathcal{J}_s^{odd} -\{ q'+s\}\}.$$
\end{itemize}

\end{Prop}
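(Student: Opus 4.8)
The plan is to follow the scheme of Sections~\ref{Sec:mainproof} and~\ref{Sec:keyprop} line by line, interchanging the roles of the split and the nonsplit maximal torus of $G=PGL_2(q)$. Where the case $q\equiv 1\ \text{mod}\ 8$ uses the classes $A_{3,i}$, whose centralizer is the image $T/\mathbb{F}_q^*$ of the diagonal torus (of order $q-1$), the present case uses the classes $A_{4,j}$, whose centralizer is the nonsplit torus of order $q+1$, namely the image of $\mathbb{F}_{q^2}^*$ acting by multiplication on $\mathbb{F}_{q^2}\cong\mathbb{F}_q^2$; correspondingly the integer $q-1=2^{n-1}q'$ is everywhere replaced by $q+1=2^{n-1}q'$. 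For parts~(i)--(iii) one computes the sets $K^{2^{-1}}$ for all conjugacy classes exactly as in Table~\ref{ortho} (this computation is insensitive to the residue of $q$ modulo $8$ apart from renaming indices), and then reads off the bases and dimensions from K\"ulshammer's description of $T_1^\bot(kG)$ by a count identical to the proof of Lemma~\ref{lem-dim}. The only bookkeeping change is that now $\frac{q+1}{4}$ (and not $\frac{q-1}{4}$) is an even integer, so the singleton basis vector coming from the square roots of an involution is $A_{4,\frac{q+1}{4}}^+$ instead of $A_{3,\frac{q-1}{4}}^+$.

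The analogue of Proposition~\ref{square} to be established is that, for all $j\in\mathbb{Z}$,
$$(A_{4,j}^+)^2=\left\{\begin{array}{r@{ ,  \quad }l}
A_{4,2j}^+ & \frac{q+1}{4}\nmid j\\
0 & j=u\frac{q+1}{4}\ \text{with}\ u\ \text{odd}\\
A_1^+ & j=u\frac{q+1}{4}\ \text{with}\ u\ \text{even,}
\end{array}\right.$$
with the same argument: $(A_{4,j}^+)^2=\sum_{g\in G/C_G(A_{4,j})}gA_{4,2j}g^{-1}$, and one compares $C_G(A_{4,j})$ with $C_G(A_{4,2j})$. These coincide unless $2j\equiv\frac{q+1}{2}$ modulo $q+1$, in which case the index is $2$ so the sum vanishes in characteristic~$2$; in the remaining case $A_{4,2j}=A_1$ and $|G/C_G(A_{4,\frac{q+1}{2}})|=\frac{q(q-1)}{2}$ is odd because $q\equiv -1\ \text{mod}\ 8$, so $(A_{4,j}^+)^2=A_1^+$. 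Next one needs the analogue of Lemma~\ref{repre}: an explicit expansion of $A_{4,j}^+$ as a sum of $q(q-1)=|G/C_G(A_{4,j})|$ matrices, obtained by fixing the embedding of the nonsplit torus into $GL_2(q)$ and choosing a transversal modulo it via the Bruhat-type decomposition (one affine chart plus one opposite chart, as in Lemma~\ref{repre}), then computing the conjugates $gA_{4,j}g^{-1}$. With this in hand, the nonsplit analogue of Proposition~\ref{squareII},
$$A_{4,i}^+A_{4,j}^+=A_{4,i+j}^++A_{4,i-j}^+\qquad\text{whenever}\ \frac{q+1}{4}\nmid i,j,i\pm j,$$
follows from the same counting principle $a_K=|\,\text{elements of }K\text{ in }A_{4,i}A_{4,j}^+\,|$, the matching of traces and determinants producing precisely the classes $A_{4,i\pm j}$.

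With these three computational inputs, parts~(iv) and~(v) are proved exactly as Propositions~\ref{radical} and~\ref{radicalsquare}: the squaring formula shows that each element listed is nilpotent, hence lies in $J(\overline{Z}(kG))$, resp.\ in $J^2(\overline{Z}(kG))$ (the identities witnessing membership in $J^2$ are rewritten verbatim with $\mathcal{J}_s$ in place of $\mathcal{I}_s$ and $q+1$ in place of $q-1$); the listed elements are visibly linearly independent; and counting them gives $1+(2^{n-3}-1)+\frac{q'-1}{2}(2^{n-2}-1)=\frac{q+1}{4}-\frac{q'-1}{2}$, resp.\ $(2^{n-3}-2)+\frac{q'-1}{2}(2^{n-2}-2)=\frac{q+1}{4}-(q'+1)$. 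The matching upper bound for $\dim J(\overline{Z}(kG))$ is once more ``$\dim\overline{Z}(kG)$ minus the number of blocks'', read off from Table~\ref{blockstructureautre} ($1+\frac{q-3}{4}+\frac{q'-1}{2}$ blocks), and for $J^2$ one combines that lower bound with the already established Theorem~\ref{HZ} applied to the principal block, exactly as at the end of the proof of Proposition~\ref{radicalsquare}.

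The step I expect to be the main obstacle is the nonsplit analogue of Lemma~\ref{repre}: unlike the diagonal torus, the nonsplit torus is not in triangular form, so its coset representatives in $G$ are far less transparent, and one is forced to compute $gA_{4,j}g^{-1}$ working over $\mathbb{F}_{q^2}$, keeping careful track of norms and traces (equivalently, of the characteristic polynomial $X^2-(\sigma^j+\sigma^{jq})X+\sigma^{j(q+1)}$). Once this explicit form is pinned down, everything else is formally identical to the $q\equiv 1\ \text{mod}\ 8$ case. In particular, feeding the numbers into the block decomposition $kPGL_2(q)\sim B_0\oplus(kC_{2^{n-1}})^{\oplus\frac{q'-1}{2}}\oplus(kC_2)^{\oplus\frac{q-3}{4}}$ yields $\dim J(\overline{Z}(B_0))/J^2(\overline{Z}(B_0))=\frac{q'+3}{2}-\frac{q'-1}{2}=2$, so Theorem~\ref{HZ} again forces the scalar $c=1$, which completes the case $q\equiv -1\ \text{mod}\ 8$.
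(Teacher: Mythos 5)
Your proposal is correct and follows exactly the approach the paper intends for this section: the paper explicitly states these results \emph{without} proof, remarking that the case $q\equiv -1\bmod 8$ is "similar," and then supplies precisely the two supporting computational facts (the analogues of Proposition~\ref{square} and Proposition~\ref{squareII} for the nonsplit classes $A_{4,j}$) that you identify as the needed ingredients. Your dictionary---swap $A_{3,i}\leftrightarrow A_{4,j}$, $q-1\leftrightarrow q+1$, split torus $\leftrightarrow$ nonsplit torus of order $q+1$, $\mathcal{I}_s\leftrightarrow\mathcal{J}_s$---is the right one, and your dimension counts check out: $1+(2^{n-3}-1)+\tfrac{q'-1}{2}(2^{n-2}-1)=\tfrac{q+1}{4}-\tfrac{q'-1}{2}$ for $J(\overline{Z})$, $(2^{n-3}-2)+\tfrac{q'-1}{2}(2^{n-2}-2)=\tfrac{q+1}{4}-(q'+1)$ for $J^2(\overline{Z})$, and the block count $1+\tfrac{q-3}{4}+\tfrac{q'-1}{2}$ gives the matching upper bound $\dim\overline{Z}(kG)-\#\{\text{blocks}\}=\tfrac{q+3-2q'}{4}$. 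You also correctly locate why the singleton basis vector migrates from $A_{3,\frac{q-1}{4}}^+$ to $A_{4,\frac{q+1}{4}}^+$: it is the parity of the index $\frac{q\mp1}{2}$ of the involution (equivalently, which of $q\mp1$ is divisible by $8$) that decides which involution has a square root at all, and your observation that $|G/C_G(A_{4,\frac{q+1}{2}})|=\tfrac{q(q-1)}{2}$ is odd precisely when $q\equiv -1\bmod 8$ is the right reason the third line of the squaring formula gives $A_1^+$. The one item you flag as a genuine obstacle, a nonsplit analogue of Lemma~\ref{repre}, is indeed the step the paper leaves entirely to the reader; your plan to parametrize $G/C_G(A_{4,j})$ via a Bruhat-type transversal and track characteristic polynomials over $\mathbb{F}_{q^2}$ is a reasonable way to carry it out, though you do not execute it. Since the paper likewise withholds this computation, there is no paper proof to differ from, and your outline is a faithful reconstruction of the intended argument.
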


To prove (iv), (v) of the preceding proposition, one needs the following 
computational result.
\begin{Prop}
\begin{itemize} The following holds in  $\overline{Z}(kG)$:

\item[(i)]
For all $j\in\mathbb{Z}$  
$$(A_{4, j}^+)^2=\left\{ \begin{array} {r@{ ,  \quad }l}
                  A_{4, 2j}^+   &        \text{if }\    \frac{q+1}{4} \nmid j \\
                  0 &  \text{if}\    j=u \frac{q+1}{4}\   \text{with}\  u\  \text{odd}\\
                  A_{1}^+ &\text{if}\    j=u \frac{q+1}{4}\   \text{with}\  u \
                  \text{even}
\end{array}\right.$$
\item[(ii)] For $i, j \in \mathbb{Z}$,  if $\frac{q+1}{4} \nmid
i, j, i\pm j$, then in $\overline{Z}(kG)$
$$   A_{4, i}^+\times A_{4, j}^+=A_{4, i+j}^++A_{4, i-j}^+$$

\end{itemize}

\end{Prop}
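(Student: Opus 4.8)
The plan is to prove both formulas exactly as Proposition~\ref{square} and Proposition~\ref{squareII} were proved, but working inside the non-split maximal torus (of order $q+1$) instead of the split one, and with the roles of $q-1$ and $q+1$ --- and of the exceptional classes $A_{3,\frac{q-1}{2}}$ and $A_{4,\frac{q+1}{2}}$ --- interchanged. As in Section~\ref{Sec:keyprop} we let the index $j$ in $A_{4,j}$ range over $\mathbb{Z}$, using $A_{4,j}=A_{4,-j}=A_{4,q+1-j}$ in $PGL_2(q)$.

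Part~(i) requires no idea beyond that of Proposition~\ref{square}. Writing $(A_{4,j}^+)^2=\sum_{g\in G/C_G(A_{4,j})}gA_{4,j}^2g^{-1}$, one notes that $A_{4,j}^2$ has characteristic polynomial $x^2-(T_j^2-2N_j)x+N_j^2$, where $T_j=\sigma^j+\sigma^{jq}$ and $N_j=\sigma^{j(q+1)}=\tau^j$, which is that of the representative $A_{4,2j}$, so $A_{4,j}^2$ is conjugate to $A_{4,2j}$. If $\frac{q+1}{4}\nmid j$ then $A_{4,2j}$ is again a generic non-split class and $C_G(A_{4,j})=C_G(A_{4,2j})$, so the sum equals $A_{4,2j}^+$. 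If $j=u\frac{q+1}{4}$ with $u$ odd then $2j\equiv\frac{q+1}{2}\bmod(q+1)$, so $A_{4,2j}=A_{4,\frac{q+1}{2}}$ has centralizer of order $2(q+1)$, whence $|C_G(A_{4,2j})/C_G(A_{4,j})|=2$ and $(A_{4,j}^+)^2=2A_{4,\frac{q+1}{2}}^+=0$. If $j=u\frac{q+1}{4}$ with $u$ even then $\sigma^{2j}\in\mathbb{F}_q^*$, so $A_{4,j}^2$ is scalar, i.e.\ the identity of $PGL_2(q)$, and $(A_{4,j}^+)^2=|G/C_G(A_{4,j})|\,A_1^+=\frac{q(q-1)}{2}A_1^+=A_1^+$, the last equality because $\frac{q(q-1)}{2}$ is odd when $q\equiv-1\bmod 8$.

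Part~(ii) follows the proof of Proposition~\ref{squareII} once one has the analogue of Lemma~\ref{repre}. Since a non-split torus is not contained in a Borel subgroup, the $B$--$T$ factorisation used there is unavailable; instead one parametrises the class directly: a matrix in $GL_2(\mathbb{F}_q)$ with the irreducible characteristic polynomial $x^2-T_jx+N_j$ has nonzero lower-left entry and is determined by its first column $(a,c)$ with $c\neq 0$ and $a$ arbitrary, so
$$A_{4,j}^+=\sum_{a\in\mathbb{F}_q,\ c\in\mathbb{F}_q^*}\left(\begin{array}{cc} a & (a(T_j-a)-N_j)c^{-1}\\ c & T_j-a\end{array}\right),$$
which is a complete irredundant set of $q(q-1)$ representatives of $A_{4,j}$ in $PGL_2(q)$ (irredundancy uses $T_j\neq 0$, which holds whenever $\frac{q+1}{4}\nmid j$). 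One then left-multiplies each of these by the matrix $A_{4,i}$ and applies the counting principle of Proposition~\ref{squareII}: $a_K=|G/C_G(A_{4,i})|\cdot|\{\text{elements of }K\text{ in }A_{4,i}A_{4,j}^+\}|/|K|$. Because the determinant is multiplicative, all $q(q-1)$ resulting matrices have determinant $N_iN_j=\tau^{i+j}$; since the representative of $A_{4,i-j}$ has determinant $\tau^{i-j}$, one must first rescale it by $\tau^j$ --- legitimate in $PGL_2(q)$, and it twists the trace --- before matching. The outcome should be that $A_{4,i+j}$ and $A_{4,i-j}$ each occur an odd number of times (in fact $q+2$ times) among these matrices, while every other conjugacy class occurs an even number of times; over $k$ this leaves precisely $A_{4,i+j}^+ + A_{4,i-j}^+$. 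The genericity hypotheses $\frac{q+1}{4}\nmid i,j,i\pm j$ are exactly what makes $A_{4,i},A_{4,j},A_{4,i\pm j}$ generic non-split classes and excludes any contribution from $A_1$, $A_2$, the classes $A_{3,\cdot}$ or the exceptional class $A_{4,\frac{q+1}{2}}$. Alternatively, one can bypass the matrices entirely and read the class-algebra constants $a_{A_{4,i},A_{4,j},K}$ off the character table of Section~\ref{Sec:groups}: since every $\mu_s$ vanishes on all $A_{4,\cdot}$ and every cuspidal $\chi_l$ vanishes on all $A_{3,\cdot}$, only $1_G,\theta,sgn,\theta\otimes sgn$ and the $\chi_l$ contribute, and one is reduced to an elementary sum of $(q+1)$-th roots of unity, which modulo $2$ again gives $A_{4,i+j}^+ + A_{4,i-j}^+$.

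I expect the main obstacle to be part~(ii): setting up the Lemma~\ref{repre}-analogue for the non-split torus and carrying the matrix count through, in particular handling the rescaling in $PGL_2(q)$ and checking that all unwanted structure constants are even. Part~(i) is routine, and with both parts in hand the proofs of statements (iv) and (v) of Proposition~\ref{keyautre} run word for word as those of Proposition~\ref{radical} and Proposition~\ref{radicalsquare}, with $A_3$ replaced by $A_4$ and $q-1$ by $q+1$ throughout.
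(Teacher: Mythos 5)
The paper deliberately omits the proof of this proposition (Section 6 states the results for $q\equiv -1\bmod 8$ ``without proofs''), so there is no paper argument to compare against; you are supplying one, and the natural benchmark is whether it faithfully transposes Propositions~\ref{square} and~\ref{squareII} to the non-split torus.

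Part (i) is complete and correct. You rightly observe that $A_{4,j}^2$ has the characteristic polynomial $x^2-(T_j^2-2N_j)x+N_j^2$ of $A_{4,2j}$, so they are conjugate; the index arithmetic modulo $q+1$ is handled correctly (in particular $\frac{q+1}{4}\nmid j$ does imply $A_{4,2j}$ is generic, and $j$ an odd multiple of $\frac{q+1}{4}$ gives $2j\equiv\frac{q+1}{2}\bmod(q+1)$); and the final parity check $\frac{q(q-1)}{2}\equiv 1\bmod 2$ holds because $q\equiv-1\bmod 8$ forces $\frac{q-1}{2}$ odd. This mirrors Proposition~\ref{square} exactly.

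Part (ii) contains the interesting new idea and also the gap. Your replacement of the Bruhat-type factorisation in Lemma~\ref{repre}, which genuinely does not transfer to a non-split torus, by the direct parametrisation of the $GL_2$-class by first columns $(a,c)$ with $c\neq 0$, is correct and clean: trace and determinant force the rest of the matrix, the $(2,1)$-entry is nonzero because the characteristic polynomial is irreducible, and since $T_j\neq 0$ under the hypothesis $\frac{q+1}{4}\nmid j$, no two of the $q(q-1)$ matrices are scalar multiples of one another, so they biject onto the $PGL_2$-class. You also correctly identify the same rescaling subtlety that Proposition~\ref{squareII} handles via $\lambda=\pm\tau^{(i+j-u)/2}$. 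But you stop at ``the outcome should be $q+2$, $q+2$, even, even'' --- you never actually solve the trace/determinant equations and count solutions in $(a,c)$. That count is the entire substance of the lemma; it is where the hypotheses $\frac{q+1}{4}\nmid i,j,i\pm j$ are used, and it is the step the paper spends its effort on in Proposition~\ref{squareII}. As written, part (ii) is a correct plan, not a proof; you should carry the count through at least for the generic classes $A_{4,u}$ and then argue the remaining classes ($A_1$, $A_2$, $A_{3,\cdot}$, $A_{4,\frac{q+1}{2}}$) contribute even coefficients.

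Your fallback via the character table is a legitimate and arguably cleaner alternative that the paper does not use: writing the structure constant as $\frac{|C_i||C_j|}{|G|}\sum_\chi\chi(x_i)\chi(x_j)\overline{\chi(x_u)}/\chi(1)$ and noting that every $\mu_s$ vanishes on all $A_{4,\cdot}$ reduces the sum to $1_G,\theta,sgn,\theta\otimes sgn$ and the cuspidal $\chi_l$. But that route, too, is only sketched --- the resulting sum of $(q+1)$-th roots of unity still has to be evaluated and reduced modulo $2$, and one must check it is $2$-integral before reducing. Either way, the proof is not yet finished.
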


The proof of the main theorem~\ref{main} in this case then follows from Table~\ref{completeautre} whose entries can be deduced 
from Proposition ~\ref{keyautre}.
\begin{table}[ht]\caption{} \label{completeautre}
\begin{tabular}{|c||c|c|c|c |}

   \hline &$kG$&      $B_0 $ &  
    $(kC_2)^{\frac{q-3}{4}}$ & $(kC_{2^{n-1}})^{\oplus \frac{q'-1}{2}} $  \\
\hline 
  \hline \text{center} $Z$&  $q+2$ & $2^{n-2}+3$    
&$2\times \frac{q-3}{4}$ &$2^{n-1} \times \frac{q'-1}{2}$      \\

\hline $\overline{Z}=Z/T_1^\bot $& $\frac{q+1}{2}$ & ? &
$1\times \frac{q-3}{4}$ &  $2^{n-2}
\times \frac{q'-1}{2}$ \\

\hline $J(\overline{Z})$& $\frac{q+1}{4}-\frac{q'-1}{2}$ & ? &
$0$ &
$(2^{n-2}-1) \times \frac{q'-1}{2}$ \\
\hline $J^2(\overline{Z})$& $\frac{q+1}{4}-(q'+1)$ & ? &
$0$&
$(2^{n-2}-2) \times \frac{q'-1}{2}$ \\
\hline $J(\overline{Z})/J^2(\overline{Z})$& $\frac{q'+3}{2}$ & ?  & $0$& $
1\times \frac{q'-1}{2} $\\ \hline

\end{tabular}\end{table}

\end{document}